\def\bn{\mathbb{N}}
\def\bz{\mathbb{Z}}
\def\f{\mathbb{F}}
\newcommand{\com}[1]{#1^{\prime}}
\newcommand{\bcom}[1]{#1^{\prime\prime}}
\newcommand{\inner}[2]{\langle #1,#2\rangle}
\newcommand{\lv}{{\rm L}(V)}
\newcommand{\lvv}{{\rm L}(V^*)}
\newcommand{\luv}{{\rm L}(U,V)}
\newcommand{\eu}{{\rm L}_R(U)}
\newcommand{\ev}{{\rm L}_R(V)}
\newcommand{\ew}{{\rm L}_R(W)}
\newcommand{\evv}{{\rm L}(V^*)_R}
\newcommand{\tr}{\tilde{R}}
\newcommand{\fv}{{\rm F}(V)}
\newcommand{\ann}[2]{{\rm ann}_{#1}(#2)}
\newcommand{\im}[1]{{\rm im}\,{#1}}
\newcommand{\huv}{{\rm L}_R(U,V)}
\newcommand{\hvu}{{\rm L}(V^*,U^*)_R}
\newcommand{\du}[1]{#1^{\sharp}}
\newtheorem{theorem}{Theorem}[section]
\newtheorem{lemma}[theorem]{Lemma}
\newtheorem{co}[theorem]{Corollary}
\newtheorem{pr}[theorem]{Proposition}
\theoremstyle{remark}
\theoremstyle{definition}
\newtheorem{de}[theorem]{Definition}
\newtheorem{ex}[theorem]{Example}
\numberwithin{equation}{section}
\begin{document}

\title[]{Bicommutants and ranges of derivations} 
\author{Bojan Magajna} 
\address{Department of Mathematics\\ University of Ljubljana\\
Jadranska 21\\ Ljubljana 1000\\ Slovenia}
\email{Bojan.Magajna@fmf.uni-lj.si}

\thanks{Acknowledgment. I am grateful to Matej Bre\v sar for his comment of the first
draft of the paper.} 

\thanks{The author was supported in part by the Ministry of Science
and Education of Slovenia.}

\keywords{Bicommutant, adjoint operator, derivation, torsion, injective module}

\subjclass[2010]{15A04, 16S50, 16W25, 16U70, 47A05,  47B47}

\begin{abstract}Let $V$ be a vector space over a field $\f$, $V^*$ its dual space and $\lv$ the algebra of all linear operators on $V$.
For an operator $a\in\lv$ let $a^*$ be its adjoint acting on $V^*$, and for a subset $R$ of $\lv$ let $\bcom{R}$ be its 
bicommutant. 
If $R$ is the subalgebra of $\lv$ generated by an operator $a$, we prove that  
$Z:=\{b^*:\, b\in R\}^{\prime\prime}\subseteq\{b^*:\, b\in\bcom{R}\}$; moreover $Z$ is described. This inclusion
is equality if $V$ as a module over
the polynomial algebra $\f[t]$ via $t\mapsto a$ is nice enough (say torsion, or injective, or if it contains a copy of 
$R$ as a direct summand).  Further, under the same assumption about $V$ for any $b\in\lv$, $b\in\bcom{(a)}$ if and only if 
the  derivations $d_a$ and $d_b$ satisfy $d_b(\fv)\subseteq d_a(\fv)$, where $\fv$ is the
set of all finite rank operators on $V$. The inclusion 
$d_b(\lv)\subseteq d_a(\lv)$ also holds under these conditions. 
\end{abstract}

\maketitle

\section{Introduction}
For a subset $R$ of the algebra $\lv$ of all linear operators on a vector space $V$ over a field $\f$ let $\com{R}$ be its commutant (= the set of all operators in $\lv$ that commute with
all elements of $R$) and $\bcom{R}=(\com{R})^{\prime}$ its bicommutant. We denote by
$\com{(a)}$ and $\bcom{(a)}$ the commutant and the bicommutant of a single operator
$a\in\lv$. As usual, $a^*$ denotes the adjoint of $a$, acting on the dual space $V^*$.

If $b\in\lv$ is such that
$b^*\in\bcom{(a^*)}$, then $b^*$ commutes with all operators $e\in\com{(a^*)}$, hence in particular with all operators
of the form $e=c^*$, where $c\in\com{(a)}$. Then  $bc=cb$, hence $b\in\bcom{(a)}$. This proves that
$b^*\in\bcom{(a^*)}$ implies that $b\in\bcom{(a)}$. Is the reverse of this implication also true? That is: 

\medskip
{\em Does $b\in\bcom{(a)}$ imply that $b^*\in\bcom{(a^*)}$?}

\medskip
To be in $\bcom{(a^*)}$ the operator $b^*$ must commute with each $e\in\com{(a^*)}$, but since not all such  $e$
are adjoints of operators on $V$, there is no obvious reason for $b^*$ to be in $\bcom{(a^*)}$. Indeed, the answer to
the analogous question in the context of bounded operators on Banach spaces is negative (Section 5).
But it is perhaps  surprising that in many cases the answer for general linear operators is positive.

Observe that for a subalgebra $R\subseteq\lv$, $\com{R}$ is just the algebra $\ev$ of all
$R$-module endomorphisms of $V$, and that $V^*$ is a right $R$-module by $\inner{\rho r}{\xi}:=\inner{\rho}{r\xi}$,
where $\rho\in V^*$, $\xi\in V$ and $r\in R$. (Here we are using the convenient notation $\inner{\rho}{\xi}$
for the value of a functional $\rho$ at the vector $\xi$.) 

In Section 2, we will
show  that for any $R$ modules $U$ and $V$ the right $R$-module homomorphisms  $g\in{\rm L}(V^*,U^*)_R$ can be 
interpolated on 
finite subsets of $V^*$ and $U$ by adjoints of maps  $f\in{\rm L}_R(U,V)$ (Theorem \ref{th1}), if $V$ as an $R$ module is
injective or if $U$ as an $R$ module is such that each finite subset of $U$ is contained in a complemented {\em finitely related}
submodule of $U$. Here an $R$-module $W$ is called {\em finitely related} if $W$ is isomorphic to a quotient of a free module by
a finitely generated submodule. (For example, each finitely generated module over a left noetherian algebra is finitely related
\cite{La}.) We recall that an A-module $V$ is called {\em injective}
if it is a direct summand in any module in which it is a submodule \cite[p. 204]{AF}, \cite[p. 60]{La}. This is equivalent to the
requirement that for each pair of modules $U\subseteq W$ every module homomorphism $f:U\to V$ extends to a module
homomorphism $W\to V$. We will be concerned mainly with modules over a principal ideal domain $R$, so we recall
that such a module $V$ is injective if and only if it is {\em divisible} (see \cite{Ro} or \cite[p. 71]{La}), which means
that for every $\eta\in V$ and every nonzero $p\in R$ there exists $\xi\in V$ such that $p\xi=\eta$.

For any operator $a\in\lv$ we study in Section 3 the bicommutant of $a^*$. We regard $V$ as a module over the polynomial
algebra $R=\f[t]$ via the correspondence $t\mapsto a$, so that $V^*$ is also an $R$-module via $t\mapsto a^*$.
Then $Z:=\bcom{(a^*)}$ is just the center of the endomorphism algebra $\evv$. It turns out that if $V$ is torsion-free, then 
$Z$ is isomorphic to a subalgebra of the algebra $K:=\f(t)$ of rational functions. (Recall that an $R$-module $V$ is {\em torsion} 
if each $\xi\in V$ is annihilated by some nonzero $p\in R$, 
that is $p\xi=0$. Thus  $V$ is torsion means that $a$ is  
{\em locally algebraic}. On the other hand, $V$ is called {\em 
torsion-free} if $p\xi\ne0$ for all nonzero $p\in R$ and $\xi\in V$.)
We will show that if the torsion submodule of $V$
is not $0$, but $V$ is not torsion, then each element $b$ of $Z$ induces a decomposition $V=T_b\oplus W_b$,
such that $b$ acts on $W_b$ as the multiplication by a rational function of $a$, and $T_b$ is a finite direct sum of
torsion submodules of bounded torsion, on each of which $b$ acts as a polynomial in $a$. As a consequence it will follow
that $Z$ is contained in the set $\{c^*:\, c\in Z_0\}$ where $Z_0=\bcom{(a)}$ ($Z_0$ is just the center of $\ev$).  The inclusion $Z\subseteq Z_0$ turns out
to be an equality in many cases, for example, if $V$ as an $R$-module is torsion, or injective or contains a copy of $R$
as a direct summand.

Our  initial motivation for studying the above question was the range inclusion problem for derivation ranges. If
$a$ is an element of an algebra $A$, the {\em  derivation}  induced by $a$ is the map $d_a$ on $A$, defined by
$d_a(x)=ax-xa$. The kernel of $d_a$ is just the commutant of $a$ in $A$, but the range of $d_a$ also turns out to be 
interesting. If $b$ is another element of $A$, we may ask, when is the range of $d_b$ contained in the range of
$d_a$. This problem was studied in the past by several authors, especially in the case when $A$ is the algebra of all
bounded operators on a Hilbert space $H$. Very interesting results were obtained by Johnson and Williams \cite{JW} in 
the case $a$ is a normal operator, and their work was continued  for example by Fong \cite{F}, 
Kissin and Shulman \cite{KS}, Bre\v sar \cite{Br} and in \cite{BMS}. 
Some of their results  are of such a nature that one would expect them to hold for much larger class
of operators $a$ than just normal ones. But when trying to show this in a complete generality we encountered certain 
analytic difficulties. We found, however, that the problem is interesting also in the purely algebraic 
context and, since the methods in this case are completely different from those required for bounded operators,
we decided to study this case separately. With $a\in\lv$ such that $Z=Z_0$ (where $Z$ and $Z_0$ are as in the previous 
paragraph), we will 
show in Section 4 that for any $b\in\lv$ the condition $b\in\bcom{(a)}$ is 
equivalent to $d_b(\fv)\subseteq d_a(\fv)$, where $\fv$ is the ideal in $\lv$ of finite rank operators. Then also 
the inclusions $d_{b^*}(\lvv)\subseteq d_{a^*}(\lvv)$ and  $d_b(\lv)\subseteq d_a(\lv)$ hold.

\section{Bicommutants and adjoints}

\subsection{Approximation of operators on $V^*$ by adjoints of operators on $V$} For vector spaces $U$ and $V$ over a field
$\f$ we denote by $\luv$ the space of all linear operators from $U$ to $V$. As usual, regard any vector space $V$ as a 
subspace in its bidual $V^{**}$ through the natural map $V\to V^{**}$.
\begin{lemma}\label{le1}For each $a\in \luv$ every element $\theta\in\ker{a^{**}}$ can be approximated by elements from
$\ker{a}$ in the following sense: for each finite
subset $\{\rho_j: j=1,\ldots,n\}$ of $U^*$ there exist $\xi\in \ker{a}$ such that $\inner{\rho_j}{\xi}=\inner{\rho_j}{\theta}$
for all $j$. 
\end{lemma}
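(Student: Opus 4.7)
The plan is to reduce the claim to the purely algebraic duality identity
$(\ker a)^\perp = \im a^*$, where the annihilator is taken in $U^*$. Given $\theta\in\ker a^{**}$ and the finite list $\rho_1,\ldots,\rho_n\in U^*$, the first step is to consider the linear map $\Phi\colon U\to\f^n$ defined by $\Phi(u)=(\inner{\rho_1}{u},\ldots,\inner{\rho_n}{u})$ and let $W=\Phi(\ker a)\subseteq \f^n$. The statement to be proved is exactly that the vector $v:=(\inner{\rho_1}{\theta},\ldots,\inner{\rho_n}{\theta})$ lies in $W$, since any preimage in $\ker a$ of $v$ under $\Phi$ does the job.

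Next I would argue by contradiction: if $v\notin W$, then since $W$ is a subspace of the finite-dimensional space $\f^n$ there exist scalars $\lambda_1,\ldots,\lambda_n\in\f$ such that $\sum_j\lambda_jw_j=0$ for all $w\in W$ yet $\sum_j\lambda_jv_j\ne 0$. Translating back, the functional $\rho:=\sum_j\lambda_j\rho_j\in U^*$ vanishes on $\ker a$ but $\inner{\rho}{\theta}\ne 0$.

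The key step is to produce $\eta\in V^*$ with $a^*\eta=\rho$. Because $\rho$ vanishes on $\ker a$, it descends to a well-defined functional $\bar\rho\colon U/\ker a\to\f$. Transporting $\bar\rho$ through the isomorphism $U/\ker a\cong\im a$ induced by $a$, we obtain a functional $\tilde\rho$ on $\im a\subseteq V$. Since every subspace of a vector space has a complement, $\tilde\rho$ extends to a functional $\eta\in V^*$, and by construction $\inner{a^*\eta}{u}=\inner{\eta}{au}=\bar\rho(u+\ker a)=\inner{\rho}{u}$ for every $u\in U$, so $a^*\eta=\rho$. Now the defining property of $\ker a^{**}$ gives
\[
\inner{\rho}{\theta}=\inner{a^*\eta}{\theta}=\inner{\eta}{a^{**}\theta}=0,
\]
which contradicts $\inner{\rho}{\theta}\ne 0$. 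Hence $v\in W$ and the desired $\xi\in\ker a$ exists.

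I do not expect any serious obstacle: the whole argument is standard algebraic duality, and the only thing one must be careful about is that the extension of $\tilde\rho$ from $\im a$ to all of $V$ works without any analytic hypothesis, which is guaranteed because every subspace of a vector space over $\f$ splits off. Finite-dimensionality of $\f^n$ is what lets us replace the approximation problem by a single linear dependence, so the finiteness of the subset $\{\rho_j\}$ is used exactly once and in an essential way.
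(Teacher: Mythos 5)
Your proof is correct, and it rests on the same duality identity as the paper's, but you unwind the abstract reduction explicitly rather than citing it. The paper's proof combines $\im{a^*}=(\ker a)^{\perp}$ with the natural isomorphism $(\ker a)^{\perp\perp}\cong(\ker a)^{**}$ and then appeals to the standard fact that a space is weak*-dense in its bidual. You instead make that density argument concrete: mapping $U\to\f^n$ via the $\rho_j$ turns the interpolation problem into a finite-dimensional membership question, a failure of membership produces a single separating functional $\rho=\sum_j\lambda_j\rho_j$ annihilating $\ker a$, and your construction of $\eta\in V^*$ with $a^*\eta=\rho$ is exactly the proof of $(\ker a)^{\perp}\subseteq\im a^*$ that the paper gives parenthetically; the contradiction $\inner{\rho}{\theta}=\inner{\eta}{a^{**}\theta}=0$ then uses $\theta\in\ker a^{**}$ directly. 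So the underlying mathematics is the same; your version is more self-contained (no citation to density of $W$ in $W^{**}$), at the cost of being a little longer, and it makes transparent that the finiteness of $\{\rho_j\}$ is what reduces everything to a single separating functional.
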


\begin{proof}It is well-known (and elementary) that for every $a\in\luv$  the equality
\begin{equation}\label{1}\ker{a^*}=(\im{a})^{\perp}\end{equation}
holds, where $(\im a)^{\perp}$ is the annihilator of $\im a$ in $V^*$. Similarly   
\begin{equation}\label{2}\im{a^*}=(\ker{a})^{\perp}.\end{equation} 
(For a proof of the nontrivial inclusion $(\ker{a})^{\perp}\subseteq\im{a^*}$,  note that for each $\rho\in(\ker{a})^{\perp}$
the map $a\xi\mapsto\rho(\xi)$ is well defined on $\im{a}$, and any of its linear extensions $\omega\in V^*$ satisfies
$a^*(\omega)=\omega\circ a=\rho$.) Thus
$$\ker{a^{**}}=(\im{a^*})^{\perp}=(\ker{a})^{\perp\perp}.$$
Since for each subspace $W$ of $U$, $W^{\perp\perp}$ is naturally isomorphic to the bidual $W^{**}$ of $W$, we infer that
$\ker{a^{**}}=(\ker{a})^{**}$ (where `=' means the natural isomorphism), so the lemma reduces to the well-known density 
of $W$ in $W^{**}$.
\end{proof}

\begin{theorem}\label{th1}Let $R$ be an $\f$-algebra (where $\f$ is a field) and $U$, $V$ any left
$R$-modules. If $U$ is finitely related then each $g\in\hvu$ can be approximated by adjoints of elements of $\huv$ in the following sense:
for every finite subsets $G$ of $U$ and $H$ of $V^*$ there exists $f\in\huv$ such that
$$\inner{g(\rho)}{\xi}=\inner{\rho}{f(\xi)}\ \ \mbox{for all}\ \rho\in H\ \mbox{and}\ \xi\in G.$$
The same conclusion holds also if $V$ is injective (for a general $U$) or if $U$ is such that each finite
subset of $U$ is contained in a complemented finitely presented submodule of $U$.
\end{theorem}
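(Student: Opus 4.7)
My plan begins by observing that case~(i) reduces to case~(iii). Write $U=F/K$ with $F=\bigoplus_{i\in I}Re_i$ free and $K=Rk_1+\dots+Rk_m$ finitely generated. Given a finite $G\subseteq U$, lift to $\tilde G\subseteq F$ and let $J\subseteq I$ be the finite set of indices supporting $\tilde G$ and all $k_s$. Splitting $F=F_J\oplus F_{I\setminus J}$ with $F_J=\bigoplus_{i\in J}Re_i$, we have $K\subseteq F_J$, and so $U=(F_J/K)\oplus F_{I\setminus J}$ realizes $G$ inside the finitely presented complemented summand $U_0:=F_J/K$. Case~(iii) in turn reduces to $U$ itself finitely presented: if $U=U_0\oplus U'$ with $G\subseteq U_0$ and $U_0$ f.p., let $i:U_0\hookrightarrow U$ and $p:U\to U_0$, put $g_0:=i^*\circ g$, apply the f.p.\ result to $(U_0,g_0)$ to obtain $f_0\in{\rm L}_R(U_0,V)$, and set $f:=f_0\circ p$; the desired pairings then hold because $\inner{g_0(\rho)}{\xi}=\inner{g(\rho)}{\xi}$ for $\xi\in U_0$.

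\textbf{The finitely presented core.} Now write $U=R^n/K$ with $K=\sum_{s=1}^{\ell}Rk_s$, $k_s=\sum_i a_{si}e_i$. The presentation yields the $\f$-linear map $\alpha:V^n\to V^\ell$ given by $\alpha(v)_s=\sum_i a_{si}v_i$, whose kernel is identified with $\huv$ via $f\leftrightarrow(f(\bar e_i))_i$. Given $g\in\hvu$, I define $\omega_i\in V^{**}$ by $\omega_i(\rho):=\inner{g(\rho)}{\bar e_i}$. Using the right $R$-linearity of $g$ together with $\sum_i a_{si}\bar e_i=0$ in $U$, a short computation gives $(\omega_1,\dots,\omega_n)\in\ker\alpha^{**}$. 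For an arbitrary finite $G=\{\xi_1,\dots,\xi_p\}\subseteq U$, choose expressions $\xi_i=\sum_k b_{ik}\bar e_k$ and apply Lemma~\ref{le1} to $\alpha$ with the finite test set $\{(\rho_j b_{ik})_k:i\le p,\ j\le q\}\subseteq(V^n)^*=(V^*)^n$. This produces $v=(v_k)\in\ker\alpha$, which defines $f\in\huv$ via $f(\bar e_k):=v_k$; unwinding the pairing identities yields exactly $\inner{\rho_j}{f(\xi_i)}=\inner{g(\rho_j)}{\xi_i}$ for all $i,j$.

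\textbf{The injective case.} Suppose $V$ is injective and $U$ is arbitrary. I first reduce to $U=R\xi_1+\dots+R\xi_p$ finitely generated: for any $f_0$ on such a submodule having the required pairings, injectivity of $V$ extends $f_0$ to $f\in\huv$ without changing values on the $\xi_i$. Writing now $U=R^p/K$ with $K$ possibly not f.g., I apply the finitely presented case to the quotient $R^p/F$ for each finitely generated $F\subseteq K$: this gives a tuple $v^{(F)}\in V^p$ solving the $R$-linear relations $\sum_i k_iv_i^{(F)}=0$ for $k\in F$ together with $\inner{\rho_j}{v_i^{(F)}}=\inner{g(\rho_j)}{\xi_i}=:c_{ij}$. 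Thus every finite subsystem of the full system is solvable in $V^p$. To pass to a global solution, I fix a $v^{(0)}\in V^p$ with $\inner{\rho_j}{v_i^{(0)}}=c_{ij}$ (possible by finite-dimensional considerations), and translate to $w:=v-v^{(0)}$, converting the $R$-linear relations into $\sum_ik_iw_i=-\sum_ik_iv_i^{(0)}$ with constants in $V$. I then invoke pure-injectivity of $V$ (a standard consequence of injectivity) to obtain a global solution of the finitely-consistent $R$-linear system; the remaining $\f$-linear evaluations $\inner{\rho_j}{w_i}=0$ are handled together with the $R$-linear ones by absorbing them into the system as constraints on a suitable auxiliary module.

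\textbf{Main obstacle.} The technical heart is the finitely presented case, where the correct definition of $\omega_i\in V^{**}$ and, crucially, the choice of the test set $\{(\rho_j b_{ik})_k\}\subseteq(V^*)^n$ incorporating the coefficients $b_{ik}$ that express each $\xi_i$ in terms of the generators are what allow Lemma~\ref{le1} to produce an $R$-linear $f$ matching $g$ on $G$. The most delicate step in the injective case is reconciling the purely $\f$-linear nature of the evaluation constraints $\inner{\rho_j}{v_i}=c_{ij}$ with the $R$-linear relations coming from $K$; the translation via $v^{(0)}$ and the appeal to pure-injectivity are what bridge the gap between finite subsystem solvability and global solvability, and this interplay is where the proof requires the most care.
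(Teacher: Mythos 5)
Your reductions of cases (i) and (iii) to the finitely presented core, and your treatment of that core via Lemma~\ref{le1}, are correct and are in essence the paper's own argument reorganized: the paper works directly with a presentation $U=R^{(J)}/A$ and splits $\ann{V^J}{A}=\ker r\times V^{J\setminus n}$, which is precisely the decomposition $U=(F_J/K)\oplus F_{I\setminus J}$ that you use to exhibit $G$ inside a complemented finitely presented summand. The definition of $\omega_i\in V^{**}$, the verification that $\omega\in\ker\alpha^{**}$, and the clever choice of test set $\{(\rho_jb_{ik})_k\}$ all check out. So parts (i) and (iii) of the theorem are fine.

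The injective case, however, has a genuine gap at exactly the point you flag as delicate. After translating by $v^{(0)}$, the system consists of $R$-linear equations $\sum_ik_iw_i=m_k$ (constants $m_k\in V$), together with the finitely many $\f$-linear constraints $\inner{\rho_j}{w_i}=0$. Pure-injectivity of $V$ over $R$ gives global solutions of finitely consistent systems of $R$-linear equations with constants in $V$, but the functionals $\rho_j\in V^*$ are \emph{not} $R$-linear data: the subspace $\ker\rho_j\subseteq V$ is generally not an $R$-submodule, so the condition $\rho_j(w_i)=0$ is not a pp-formula over $R$ and there is no ``auxiliary $R$-module'' into which it can be absorbed. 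The phrase ``absorbing them into the system as constraints on a suitable auxiliary module'' does not describe a valid construction, and a solution of the $R$-linear subsystem produced by pure-injectivity may land outside the finite-codimensional affine subspace $\{\rho_j(w_i)=0\}$; you would then need to correct it by an element of $\ann{V^p}{K}={\rm L}_R(U_0,V)$ with prescribed values $\rho_j(z_i)$, and there is no reason such $z$ exists. So the step from finite consistency to global solvability is not justified. (For the ring $R=\f[t]$ used in the rest of the paper, the injective case is immediate from case~(i): a finitely generated submodule of $U$ is automatically finitely presented since $R$ is noetherian, and injectivity of $V$ then extends the map. But your argument is aimed at a general $\f$-algebra, and there it is incomplete.)
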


\begin{proof}Let us first consider the case when $U$ is finitely related, hence of the
form $U=R^{(J)}/A$ for some index set $J$ and a finitely generated left submodule $A$ of $R^{(J)}$. (Here $R^{(J)}$ denotes a free module,
the submodule of the cartesian power $R^{J}$, consisting of elements which have only finitely many nonzero components.)
Let $\{r_1,\ldots,r_m\}$ be a set of generators of $A$. Since the space ${\rm L}_R(R^{(J)},V)$
can be naturally identified with $V^J$, we thus have a natural isomorphism
$${\rm L}_R(U,V)=\{f\in{\rm L}_R(R^{(J)},V):\, f(A)=0\}=\ann{V^J}{A}.$$
Under this isomorphism a map $f\in\huv$ corresponds to the element $$(fq(e_j)))\in\ann{V^J}{A},$$ where
the $e_j$ are the usual basic elements of $R^{(J)}$ ($e_j$ has $1$ on the $j$-th position and $0$ elsewhere) and
$q:R^{(J)}\to U$ is the quotient map.
Similarly, using the natural isomorphism
$${\rm L}(V^*,U^*)_R={\rm L}_R(U,V^{**})\ \ (g\mapsto g^*|U),$$
we have that
$${\rm L}(V^*,U^*)_R=\ann{(V^{**})^J}{A}.$$
The space $(V^{**})^J$ is the dual of $(V^*)^{(J)}$ and it can be verified that under the above identifications the
theorem translates to the following statement: given $\theta=(\theta_j)\in(V^{**})^J$, for each finite subset $H_0$ of 
$(V^*)^{(J)}$ there exists $v=(v_j)\in V^J$ such that $\inner{\rho}{v}=\inner{\rho}{\theta}$ for all $\rho\in H_0$. (Here the fact that
$(V^*)^{(J)}$ is an $R$-module is used.)
Denoting by $r_{i,j}$ the components of the generators $r_i$ of $A$, an element $v=(v_j)\in V^J$ (respectively
an element $\theta=(\theta_j)\in (V^{**})^J$) is in
$\ann{V^J}{A}$ (resp. in $\ann{(V^{**})^J}{A}$) if and only if
$$\sum_{j\in J}r_{i,j}v_j=0\ \ (\mbox{resp.} \sum_{j\in J}r_{i,j}\theta_j=0)\ \ \mbox{for all}\ i=1,\ldots m.$$
The sums here are finite since each $r_i$ has only finitely many non-zero components. Let $n$ be a finite subset
of $J$ such that $r_{i,j}=0$ for all $i\in\{1,\ldots,m\}$ if $j\in J\setminus n$. Let $r:V^n\to V^m$ be the map
defined by $(r((v_j)))_i=\sum_{j\in n}r_{i,j}v_j$. Then $r^{**}:(V^{**})^n\to (V^{**})^m$ is given by
$(r^{**}((\theta_j)))_i=\sum_{j\in n}r_{i,j}^{**}\theta_j$ and it follows that
$$\ann{V^J}{A}=\ker r\times V^{J\setminus n}\ \ \mbox{and}\ \ 
\ann{(V^{**})^J}{A}=\ker r^{**}\times(V^{**})^{J\setminus n}.$$
For finitely presented modules the theorem follows now from Lemma \ref{le1} (and the density of a space in its bidual)
and the rest of the theorem is an immediate consequence. In fact, it is sufficient to require that each finite
subset $F$ of $U$ is contained in a finitely related submodule $U_F$ of $U$ such that each homomorphism
$U_F\to V$ can be extended to a homomorphism from $U$ to $V$.
\end{proof}

The condition in the last sentence of the proof of Theorem \ref{th1} is satisfied, for example in the case
when $R$ is left noetherian, $U=V$ and each finite subset $F$ of $V$ is contained in a quasiinjective 
(in the sense of \cite[6.74]{La}) 
submodule of $V$, but we will not use this in the paper.
Now we show by an example that  Theorem \ref{th1} can not be extended to general modules.
\begin{ex}Let $V$ be an infinite dimensional vector space and $W$ a weak* dense subspace of $V^*$, different
from $V^*$. (For example, $W=\ker\theta$ for some $\theta\in V^{**}\setminus V$.) Let $R=\lv$,  $J=\{a\in R:\,
\im{a^*}\subseteq W\}$ (so that $J$ is a left ideal in $R$) and $U:=R/J$. Then, denoting by $S_{\perp}$ and $S^{\perp}$
the annihilators of a subset $S\subseteq V^*$ in $V$ and in $V^{**}$ (respectively), and  identifying ${\rm L}_R(R,V)$ 
with $V$, we have that
$$\huv=\ann{V}{J}=\cap_{a\in J}\ker{a}=(\sum_{a\in J}\im{a^*})_{\perp}=W_{\perp}=0,$$
but
$$\hvu={\rm L}_R(U,V^{**})=\cap_{a\in J}\ker{a^{**}}=(\sum_{a\in J}\im{a^*})^{\perp}=W^{\perp}\ne0.$$
\end{ex}
The above example leaves open the possibility that Theorem \ref{th1} might be true for general modules over
nice algebras $R$, such as $R=\f[t]$. To see that this is not the case, let $V=R$ and let $U$ be the field $K=\f(t)$ of
all rational functions over $\f$ (that is, the  field of quotients of $R$). Then $\huv=0$ since $U$ is divisible 
and $V$ has no nonzero divisible $R$-submodules. But $\hvu\ne0$ since $U^*$ is divisible (namely, a vector space over $K$),
hence injective, while  as we show in the example below, $V^*$ contains a copy $R_0$ of $R$. (Take any nonzero
homomorphism from $R_0$ to $U^*$ and extend it to a homomorphism from $V^*$ to $U^*$.)

\begin{ex}\label{ex1}Let $V=\f^{(\bn)}$ be the vector space of all sequences with the entries in a field $\f$ 
that have only finitely many nonzero terms. Let $a\in\lv$ be the
shift to the right:
$$a(\xi_0,\xi_1,\ldots)=(0,\xi_0,\xi_1,\ldots).$$
$V$ has a cyclic vector $\xi=(1,0,0,\ldots)$ for $a$  and $p(a)\xi\ne0$ for all nonzero polynomials $p\in R$, 
hence $V$, as a module over $R$, is isomorphic
to $R$. Since $R$ is commutative this implies that $\bcom{(a)}=\com{(a)}\cong{\rm L}_R(R)=R$. 

The dual $R^*$ of $R$ is isomorphic to $V^*$, hence to the module $\f^{\bn}$ of all sequences with the entries in $\f$,
where the module operation is given by the backward shift $a^*(\xi_0,\xi_1,\ldots)=(\xi_1,\xi_2,\ldots)$.
Observe that $V^*$ is not a torsion module: it is possible to recursively define $\eta_n\in\f$ such that all the 
translates $(a^*)^n\eta$ ($n\in\bn$) of the vector $\eta:=(\eta_0,\eta_1,\ldots)$ are linearly independent, hence
$p(a^*)\eta\ne0$ for all nonzero polynomials $p\in\f[t]$. Indeed, set $\eta_0=1$ and assume inductively that
$\eta_0,\ldots,\eta_{2n}$ have already been found such that the determinant 
$$\delta_n=\left|\begin{array}{llll}
\eta_0&\eta_1&\ldots&\eta_n\\
\eta_1&\eta_2&\ldots&\eta_{n+1}\\
\ldots&\ldots&\ldots&\ldots\\
\eta_n&\eta_{n+1}&\ldots&\eta_{2n}
\end{array}\right|$$
is nonzero. Then we can find $\eta_{2n+1},\eta_{2n+2}$ in $\f$ such that the corresponding determinant $\delta_{n+1}$
is nonzero. (We can even choose $\eta_{2n+1}=0$.)
\end{ex}

\subsection{Bicommutants and adjoints}
Theorem \ref{th1} has the following simple consequence.
\begin{co}\label{co2}If $V$ is injective over $R$ or if each finite subset of $V$ is contained in a 
finitely related complemented submodule of $V$ then 
$$\bcom{(\tr)}\supseteq\widetilde{\bcom{R}},\ \ \mbox{where}\ \tilde{R}:=\{a^*:\, a\in R\}\ \mbox{and}\ \widetilde{\bcom{R}}:=\{b^*:\, b\in\bcom{R}\}.$$
\end{co}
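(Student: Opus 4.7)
The plan is to unpack the conclusion directly: we must show that every $b \in \bcom{R}$ satisfies $b^* \in \bcom{(\tr)}$, that is, $b^*$ commutes with every $g \in \com{(\tr)}$ in $\lvv$. Unwinding the definition of the right $R$-module structure on $V^*$, the condition $g a^* = a^* g$ for all $a \in R$ is precisely the statement that $g$ is a right $R$-module endomorphism of $V^*$, so $\com{(\tr)} = {\rm L}(V^*)_R$. This is exactly the setting of Theorem \ref{th1} with $U = V$: under either hypothesis on $V$, each such $g$ can be interpolated on arbitrary finite data by adjoints of elements of ${\rm L}_R(V) = \com{R}$.

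With this identification in hand, I would fix $b \in \bcom{R}$, $g \in \com{(\tr)}$, and arbitrary $\rho \in V^*$, $\xi \in V$. Pairing $b^* g(\rho)$ and $g b^*(\rho)$ against $\xi$ reduces the operator identity $b^* g = g b^*$ to the scalar equality
$$\inner{g(\rho)}{b\xi} = \inner{g(b^*\rho)}{\xi}.$$
To produce this, I would apply Theorem \ref{th1} to the finite sets $H = \{\rho,\, b^*\rho\} \subseteq V^*$ and $G = \{\xi,\, b\xi\} \subseteq V$, obtaining some $f \in \com{R}$ with $\inner{g(\sigma)}{\eta} = \inner{\sigma}{f(\eta)}$ for all $\sigma \in H$, $\eta \in G$. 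Since $b \in \bcom{R}$ commutes with $f$, the chain
$$\inner{g(\rho)}{b\xi} = \inner{\rho}{f(b\xi)} = \inner{\rho}{bf(\xi)} = \inner{b^*\rho}{f(\xi)} = \inner{g(b^*\rho)}{\xi}$$
verifies the required identity, and since $\rho$ and $\xi$ were arbitrary we conclude $b^*g = gb^*$.

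There is no real obstacle here; the point to appreciate is that the interpolating $f \in \com{R}$ is selected \emph{after} the pair $(\rho,\xi)$ is fixed, and is allowed to depend on it. One does not need a single $f$ approximating $g$ globally (which is in general impossible, as Example 2.3--2.4 of the paper show), only one that matches $g$ on the four pairings $(\sigma,\eta) \in H \times G$ used in the computation above. Everything else is the defining property of the adjoint together with the fact that $\bcom{R}$ commutes with $\com{R}$.
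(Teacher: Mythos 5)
Your proposal is correct and follows essentially the same route as the paper: identify $\com{(\tr)}$ with ${\rm L}(V^*)_R$, invoke Theorem \ref{th1} with $U=V$ to interpolate $g$ by an adjoint $f^*$ with $f\in\com{R}$ on the relevant finite sets, then chain through the defining property of the adjoint and the fact that $b\in\bcom{R}$ commutes with $f$. Your remark that $f$ may depend on the fixed pair $(\rho,\xi)$ is exactly the point the paper's proof also relies on.
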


\begin{proof}For each $b\in\bcom{R}$ we have to show that  $b^*g=gb^*$ for all $g\in\com{\tr}$, or equivalently, that
$$\inner{b^*g\rho}{\xi}=\inner{gb^*\rho}{\xi}$$
for all $\xi\in V$ and $\rho\in V^*$. Given $g$, $\xi$ and $\rho$, by Theorem \ref{th1} there exists $f\in\com{R}$ such that
$$\inner{g\rho}{b\xi}=\inner{\rho}{fb\xi}\ \ \mbox{and}\ \ \inner{gb^*\rho}{\xi}=\inner{b^*\rho}{f\xi}.$$
Hence
\begin{align*}\inner{b^*g\rho}{\xi}=\inner{g\rho}{b\xi}=\inner{\rho}{fb\xi}=\inner{\rho}{bf\xi}
=\inner{b^*\rho}{f\xi}=\inner{gb^*\rho}{\xi}.
\end{align*}
\end{proof}

Observe that if $R$ is generated by a single operator $a$ (and the identity) Corollary \ref{co2} says that
the center $Z$ of the algebra $\evv$ contains all operators $b^*$ such that $b$ is in the center $Z_0$ of $\ev$.
At first the author was convinced that these two sets always coincide, but the following example shows that
this is not true. We will need the simple well-known fact that a module $V$ over $R=\f(t)$ is torsion-free
if and only if $V^*$ is divisible. 

\begin{ex}\label{ex11}Let $U$ and $W$ be torsion-free modules over $R=\f[t]$ such that there are no nonzero homomorphisms
from $U$ to $W$ and also from $W$ to $U$. For example, take two distinct primes $p,q\in R$ (such that $q$ is not
a constant multiple of $p$) and let $U$ and $W$ be the submodules of $K=\f(t)$ consisting of all rational functions
with the denumerators a power of $p$ and $q$, respectively. Let $V=U\oplus W$. Then $\ev$ is the direct sum $\eu\oplus\ew$,
hence the center $Z_0$ of $\ev$ is the direct sum of the centers $Z_1$ and $Z_2$ of $\eu$ and $\ew$. On the other hand,
since $U^*$ and $V^*$ are injective and non-torsion (for both have $R^*$ as a quotient, which is not torsion
by Example \ref{ex1}), there exist nonzero homomorphisms from $U^*$ to $V^*$, and such homomorphisms present an obstruction
for an element $z_1^*\oplus z_2^*$ ($z_i\in Z_i$) to be in the center of $\evv$.
\end{ex}

In the next section we will show that $Z\subseteq\{z^*:\ z\in Z_0\}$.

\section{Bicommutants of linear operators}

From now on  $R$ will  denote the algebra $\f[t]$ of polynomials  with coefficients in 
a field $\f$, and $K=\f(t)$ will be the field of all rational functions. A vector space $V$ over $\f$ will be
regarded as an $R$-module via $t\mapsto a$, where $a\in\lv$ will be fixed. Thus, for $p\in R$ and $\xi\in V$,
$p\xi$ means $p(a)\xi$. By a {\em prime} in $R$ we mean an irreducible polynomial with the leading coefficient 1.

\subsection{Preliminaries}

We would like to describe the center $Z$ of $\evv$ (which is just $\bcom{(a^*)}$). 

If $V$ is torsion-free, then $V^*$ is divisible, hence by \cite[Theorem 3.48]{La} $V^*$ is a direct sum of 
indecomposable injective
modules $W_k$. By \cite[Example 3.63]{La} every such $W_k$ is isomorphic to either  
$K=\f(t)$ or to one of the primary summands $R(p^{\infty})$ of the torsion module $K/R$, where $p\in R$ is prime. 

We note that 
$R(p^{\infty})$ is equal to the union of the increasing sequence of cyclic
submodules $C_n=R(p^{-n}+R)\cong R/(p^n)$ ($n=1,2,\ldots$), which are invariant under all endomorphisms of 
$R(p^{\infty})$. Since the endomorphism algebra of the cyclic module $R/(p^n)$ is isomorphic to $R/(p^n)$ (namely,
each endomorphism is determined by the image of the generator $1+(p^n)$), it follows that the 
endomorphism algebra of $R(p^{\infty})$  can be identified with 
$$\lim_{\leftarrow}{\rm L}_R(C_n)=\lim_{\leftarrow}R/(p^n)=:\hat{R}_p.$$
This algebra is analogous to the ring of $p$-adic integers \cite[p. 54]{AF}; its elements can be 
regarded as formal power series of the form 
\begin{equation}\label{50}f=\sum_{j=0}^{\infty}f_jp^j,\end{equation}
where $f_j\in R$ are of degree less than the degree of $p$.  
Each such series acts as an endomorphism of $R(p^{\infty})$ by multiplication: 
$f\cdot p^{-k}=\sum_{j=0}^{k-1}f_jp^{j-k}$ modulo $R$ ($k=1,2,\ldots$). Observe that on each cyclic submodule
$C_n$ this multiplication by $f$ has the same effect as the multiplication by the polynomial  
$\sum_{j=0}^{n-1}f_jp^j\in R$. Further, in this way $R/(p^n)\cong C_n$ becomes a module over 
$\hat{R}_p$, and an $R$-module homomorphism of such modules is automatically an $\hat{R}_p$-module homomorphism. We 
shall also need to know that there is a monomorphism of 
rings $\iota: R_p\to \hat{R}_p$, where $R_p:=\{u/v:\, u,v\in R,\ v\ \mbox{prime to}\ p\}$. By definition $\iota(u/v)$
is the element in $\lim_{\leftarrow}R/(p^n)$ with the component in $R/(p^n)$ equal to the class of $u/v$ in $R/(p^n)$. 
(Since $v$ is not divisible by $p$, the class $(v)_n$ is invertible in $R/(p^n)$,
hence $(u/v)_n:=(u)_n(v)_n^{-1}$ is meaningful.) We will thus regard $R_p$ as a subring in $\hat{R}_p$.

\subsection{Torsion-free modules}
\begin{lemma}Let $V$ be torsion free. Then in the decomposition of $V^*$ into a direct sum of indecomposable injective 
modules at least one summand must be $K$, hence the decomposition takes the form 
\begin{equation}\label{140}V^*=K^{(n_0)}\oplus(\oplus_{p\in P_V}R(p^{\infty})^{(n_p)}),\end{equation}
where $P_V$ is the set of all primes $p\in R$ such that ${\rm ann}_{V^*}(p)\ne0$ and $n_0\ne0$, $n_p\ne0$ are cardinal numbers.
\end{lemma}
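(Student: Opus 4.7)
The plan is to first invoke the structure theorem for divisible modules over a PID. The paper has noted that $V$ torsion-free implies $V^*$ divisible; since $R=\f[t]$ is a PID, $V^*$ is injective, so \cite[Theorem 3.48 and Example 3.63]{La} yield a decomposition $V^* = \bigoplus_k W_k$ with each $W_k$ isomorphic either to $K$ or to $R(p^\infty)$ for some prime $p$. It remains to prove (i) that at least one summand is $K$, and (ii) that the primes $p$ for which an $R(p^\infty)$ summand appears are exactly those in $P_V$.

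For (i), any direct sum of the modules $R(p^\infty)$ is torsion, since each element of a direct sum has finite support and is thus annihilated by a single polynomial. So it suffices to produce one non-torsion element of $V^*$. Pick any nonzero $\xi_0 \in V$; torsion-freeness gives an $R$-module embedding $R \hookrightarrow V$, $r \mapsto r\xi_0$. The induced restriction map $\pi: V^* \to (R\xi_0)^* \cong R^*$ is $R$-linear by direct check from the definition of the action on the dual, and surjective as a linear map because any linear functional on a subspace extends to $V$ once a vector space complement is chosen. By Example \ref{ex1}, $R^*$ contains an element $\eta$ that is not torsion over $R$; any lift $\tilde\eta \in \pi^{-1}(\eta)$ is non-torsion in $V^*$, since $p\tilde\eta = 0$ would give $p\eta = \pi(p\tilde\eta) = 0$.

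For (ii), in any decomposition $V^* = \bigoplus_k W_k$ one has $\ann{V^*}{p} = \bigoplus_k \ann{W_k}{p}$. Now $\ann{K}{p} = 0$ since $K$ is a field; for a prime $q \ne p$ one has $\ann{R(q^\infty)}{p} = 0$ because $\gcd(p,q^n)=1$ forces any element annihilated by both $p$ and $q^n$ to vanish; and $\ann{R(p^\infty)}{p} \ne 0$ since the class of $p^{-1}+R$ lies in it. Hence $\ann{V^*}{p} \ne 0$ precisely when some $W_k$ is isomorphic to $R(p^\infty)$, which is the defining condition for $p \in P_V$. The conventions $n_0 \ne 0$, $n_p \ne 0$ amount to simply excluding trivial summands from the indexing.

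The only real obstacle is step (i): one has to transport a non-torsion element from $R^*$ (already supplied by the explicit determinantal construction of Example \ref{ex1}) into $V^*$. Once the restriction trick is in place the rest is routine module bookkeeping.
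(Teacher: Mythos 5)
Your argument is correct and follows essentially the same route as the paper: both proofs reduce to the surjection $V^* \twoheadrightarrow R^*$ coming from the embedding $R \hookrightarrow V$ (which exists by torsion-freeness) together with the fact, from Example 2.4, that $R^*$ is not torsion — the paper phrases this as a contradiction (torsion passes to quotients), while you lift a non-torsion element directly, which is the same idea. Your explicit verification of part (ii) via the annihilator computation is routine bookkeeping that the paper leaves implicit, and it is correct.
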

\begin{proof} Otherwise $V^*$ would be 
a torsion module, hence such would also be every quotient of $V^*$. But, since $V$ is torsion-free, it contains a
copy of $R$, hence $R^*$ is a quotient of $V^*$. However, by Example \ref{ex1}  
$R^*$ is not torsion.
\end{proof}

With respect to the decomposition (\ref{140}) each endomorphism of $V^*$
is represented by a matrix of homomorphisms between various summands. An element of the center
$Z$ of $\evv$ must commute in particular with the projections onto the summands, so is represented by a diagonal matrix 
$b$ with the diagonal entries in $K$ or in $\hat{R}_p$. 
If $r_0\in K$ and
$f\in\hat{R}_p$ are such entries, then 
\begin{equation}\label{70}fg(r)=g(r_0r)\ \ (r\in K)\end{equation}
for each $R$-module homomorphism $g:K\to R(p^{\infty})$, since $g$ can be extended to the endomorphism
of $V^*$ by $0$ on other positions in the corresponding matrix, and $b$ must commute with this extension. 

\begin{lemma}For a prime $p\in R$ the  presence of the direct summand $R(p^{\infty})$ in the
decomposition (\ref{140}) of $V^*$ implies the equality 
$r_0=f$ in $\hat{R}_p$. (More precisely, $f$ must be equal to the image of $r_0$ in $\hat{R}_p$ under the  ring monomorphism
$R_p\to \hat{R}_p$.) This equality is also sufficient for (\ref{70}).
\end{lemma}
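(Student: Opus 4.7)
The plan is to test the relation (\ref{70}) against one concrete homomorphism $g_0 : K \to R(p^{\infty})$, namely the partial-fraction projection $g_0 : K \to K/R \twoheadrightarrow R(p^{\infty})$ that picks out the $p$-primary summand of $K/R$, and then to use the observation that every $R$-module map $K \to R(p^{\infty})$ is automatically $R_p$-linear to handle the general $g$. This choice of $g_0$ is natural because $R(p^{\infty})$ is a direct summand of $V^*$ (so any map from $K$ to that summand is an endomorphism of $V^*$ extended by $0$), and because $g_0$ has a transparent action on the basic elements $p^{-n}+R$ of $R(p^{\infty})$.

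For necessity, I would plug $g=g_0$ and $r=p^{-n}$ into (\ref{70}). The left-hand side is $f\cdot(p^{-n}+R)$; since $f\in\hat{R}_p$ acts on $C_n\cong R/(p^n)$ via its $n$-th component $f_n$, this equals $f_n\, p^{-n}+R$, and in particular lies in $C_n$. To compute the right-hand side, write $r_0=u/v$ in lowest terms with $v=p^k v'$, $p\nmid v'$, and use B\'ezout $1 = a p^{n+k} + b v'$ to obtain $g_0(r_0/p^n)= bu\cdot p^{-(n+k)}+R$. If $k\geq 1$, then $p\nmid bu$ (since $p\nmid u$ and $bv'\equiv1\pmod{p^{n+k}}$), so this element lies in $C_{n+k}\setminus C_n$, contradicting that the left-hand side lies in $C_n$; hence $p\nmid v$, i.e.\ $r_0\in R_p$. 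With $k=0$ we have $b\equiv v^{-1}\pmod{p^n}$, and comparing the two sides gives $f_n \equiv uv^{-1} \equiv r_0 \pmod{p^n}$ for every $n$, which says precisely $f=\iota(r_0)$.

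For sufficiency, I would first note that for every $v\in R$ prime to $p$, multiplication by $v$ is a bijection on each $C_n$ and hence an automorphism of $R(p^{\infty})$; consequently any $R$-linear $g:K\to R(p^{\infty})$ is automatically $R_p$-linear, because for $u/v\in R_p$ and $x\in K$ the element $g((u/v)x)$ is the unique $y\in R(p^{\infty})$ with $vy = u\, g(x)$. Therefore $g(r_0 r)=r_0\, g(r)$, where $r_0\in R_p$ acts on $R(p^{\infty})$ via the $R_p$-module structure. On each $C_n$ this action is multiplication by $r_0\bmod p^n$, which by definition of $\iota$ agrees with the action of $f=\iota(r_0)$ through its reduction $f_n$; hence $r_0\, g(r) = f\cdot g(r)$ and (\ref{70}) holds.

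The main obstacle is purely bookkeeping: keeping careful track of which $C_n$ a given element of $R(p^{\infty})$ lives in, and consistently identifying the action of $\hat{R}_p$ on $C_n$ with reduction modulo $p^n$. Once the canonical map $g_0$ and the elements $r=p^{-n}$ are chosen, everything reduces to partial fractions and B\'ezout's identity.
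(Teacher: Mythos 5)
Your proof is correct and rests on the same key idea as the paper's: test (\ref{70}) against the canonical partial--fraction projection $g_0\colon K\to K/R\twoheadrightarrow R(p^{\infty})$, which is a legitimate $g$ because $R(p^{\infty})$ is a direct summand of $V^*$. The difference is in how you extract information from that single test map. The paper plugs in $r=1$, uses $g_0(1)=0$ to place $r_0$ in $\ker g_0 = R_p$ immediately, then proves the intertwining identity $g(r_0 r)=r_0g(r)$ (valid for every $R$-module map $g\colon K\to R(p^{\infty})$) and finishes by the surjectivity of $g_0$. You instead plug in $r=p^{-n}$ and carry out an explicit B\'ezout/partial--fraction computation of $g_0(u/(p^{n+k}v'))$, forcing $k=0$ by an order comparison in $R(p^\infty)$ and reading off $f_n\equiv uv^{-1}\pmod{p^n}$ directly. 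Your sufficiency argument (every $R$-linear $g\colon K\to R(p^\infty)$ is automatically $R_p$-linear, and $\iota(r_0)$ acts as $r_0$ on each $C_n$) is logically the same observation as the paper's identity (\ref{71}), just packaged as $R_p$-linearity. Net effect: the paper's route is slicker and makes the mechanism (kernel and surjectivity of $g_0$) more transparent, while your route is more hands-on and self-contained, exhibiting the exact element of $R(p^\infty)$ that would obstruct (\ref{70}) if $p\mid v$. Both are sound; there is no gap.
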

\begin{proof}Let $g$ be 
the composition of the quotient map
$K\to K/R$ followed by the projection of $K/R$ onto its $p$-primary component. The effect of $g$ on any rational function
$r$ can be seen by expanding $r$ into partial fractions with denominators powers of primes $q\in R$:
$g$ annihilates all the terms with denominators of the form $q^n$, $q\ne p$ ($n=1,2,\ldots$),  and leaves the terms with denominators
of the form $p^n$ unchanged, so the kernel of $g$ is $K\cap\hat{R}_p=R_p$.

Set $r=1$ in (\ref{70}). Since $g(1)=0$, it follows that $g(r_0)=0$, which  means 
that $r_0\in K\cap\hat{R}_p$. If we  show that 
\begin{equation}\label{71}g(r_0r)=r_0g(r)\ \ (r\in K),\end{equation}
then from (\ref{70}) and (\ref{71}) we will have $fg(r)=r_0g(r)$ for all $r$, hence $f=r_0$ in $\hat{R}_p$ since $g$ is surjective. 
Let $r_0=u/v$, where $u,v\in R$ are relatively prime; then $r_0$ is in
$\hat{R}_p$ if and only if $v$ is not divisible by $p$, and then $v$ is invertible in $\hat{R}_p$. For any $R$-module
map $g:K\to R(p^{\infty})$ we have $vg(r_0r)=g(vr_0r)=g(ur)=ug(r)$, hence $g(r_0r)=(u/v)g(r)=r_0g(r)$.

Conversely, if $r_0=f$ in $\hat{R}_p$, then since (\ref{71}) holds for all module homomorphisms $g:K\to R(p^{\infty})$,
(\ref{70}) also holds.
\end{proof}

Observe that the summand $R(p^{\infty})$ is present in the decomposition of $V^*$ if and only if $\ker p(a^*)\ne0$, 
which is equivalent to $p(a)V\ne V$. 
Note also that, there are no nonzero $R$-module homomorphisms from $R(p^{\infty})$ to $K$ since $R(p^{\infty})$ is torsion,
while $K$ is torsion-free. Thus, we may summarize the  discussion so far in the following proposition.

\begin{pr}\label{pr4}
If $V$ is a torsion-free module over $R=\f[t]$, then the center $Z$ of $\evv$ is isomorphic to 
the algebra $A:=\cap_{p\in P_V}(K\cap\hat{R}_p)$, where $K=\f[t]$ and $P_V$ is the set of all primes $p\in R$ such that
$pV\ne V$.  In other words, if an operator $a\in\lv$ is such that $p(a)$ is injective
for all $p\in R$, then the bicommutant $\bcom{(a^*)}$ is isomorphic to the subalgebra $A$ of $K$ consisting of all 
$r=u/v$ ($u,v\in R$ relatively prime) such that $v(a)$ is invertible. 
\end{pr}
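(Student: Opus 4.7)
The approach is to exploit the decomposition (\ref{140}) of $V^*$ to reduce each element $b$ of the center $Z=\bcom{(a^*)}$ to a single scalar in $K$. To begin, observe that for each summand $W$ of (\ref{140}) the projection $\pi_W$ onto $W$ lies in $\evv$, so any $b\in Z$ commutes with it and hence preserves $W$; this makes $b$ block-diagonal. For two summands of the same isomorphism type, any isomorphism between them, extended by zero to $V^*$, belongs to $\evv$, so commuting $b$ with such an isomorphism forces its restrictions to coincide under the identification. Since the $R$-endomorphism algebras of $K$ and $R(p^{\infty})$ are $K$ itself and $\hat{R}_p$ respectively, this data packages into one scalar $r_0\in K$ (the action on any $K$-summand, available because $n_0\ne 0$) together with one element $f_p\in\hat{R}_p$ for each $p\in P_V$ (the action on any $R(p^{\infty})$-summand).

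The cross-constraints come next. For each $p\in P_V$, take the nonzero homomorphism $g\colon K\to R(p^{\infty})$ from the preceding lemma and extend it by zero to an element of $\evv$; the relation $bg=gb$ is then precisely the hypothesis (\ref{70}), and the lemma concludes that $f_p=\iota(r_0)$ in $\hat{R}_p$, so in particular $r_0\in K\cap\hat{R}_p=R_p$. Intersecting over $p\in P_V$ gives $r_0\in A$, and the assignment $b\mapsto r_0$ is visibly an injective algebra homomorphism $Z\to A$, since the existence of the $K$-summand together with the relations $f_p=\iota(r_0)$ pins $b$ down. For the reverse direction, given $r_0\in A$, define $b_{r_0}\in\lvv$ to act as multiplication by $r_0$ on each $K$-summand and by $\iota(r_0)\in\hat{R}_p$ on each $R(p^{\infty})$-summand; verifying $b_{r_0}\in Z$ requires checking commutation with a general endomorphism of $V^*$ written as a matrix of component homomorphisms between summands. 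The five types of components reduce to: $K\to K$ (commutativity of $K$), $R(p^{\infty})\to R(p^{\infty})$ (commutativity of $\hat{R}_p$), $R(p^{\infty})\to K$ and $R(q^{\infty})\to R(p^{\infty})$ for $q\ne p$ (both zero by primary/torsion considerations), and $K\to R(p^{\infty})$ (the sufficiency direction of the preceding lemma).

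The main technical obstacle is the verification in the infinite-sum case that componentwise commutation with $b_{r_0}$ yields global commutation with an arbitrary $c\in\evv$; this relies on the fact that any such $c$ is determined by its compositions with the inclusions and projections associated to the summands of (\ref{140}), so a check on components is enough. The final sentence of the proposition is then a dictionary translation: the hypothesis that $p(a)$ is injective for every prime $p$ says exactly that $V$ is torsion-free, and under this assumption $P_V$ is the set of primes for which $p(a)$ fails to be surjective; an element $u/v\in K$ in lowest terms lies in $R_p$ iff $p\nmid v$, so lies in $A$ iff every prime factor of $v$ lies outside $P_V$, i.e., iff $v(a)$ is surjective, which combined with the injectivity already guaranteed by torsion-freeness is the same as $v(a)$ being invertible.
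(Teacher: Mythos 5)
Your argument is correct and follows essentially the same route as the paper: you exploit the decomposition of $V^*$ into indecomposable injectives, use commutation with projections and inter-summand isomorphisms to diagonalize any $b\in Z$ and identify its diagonal entries across summands, invoke the cross-constraint lemma to force the $\hat{R}_p$-entries to agree with the $K$-entry (yielding $r_0\in A$), and reverse the argument via the sufficiency half of that lemma together with the vanishing of homomorphisms from torsion to torsion-free summands. The only difference from the paper is expository: you spell out the converse verification in slightly more detail, whereas the paper packages it into the brief remark that the lemma's equality is also sufficient and that ${\rm L}_R(R(p^\infty),K)=0$.
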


\subsection{Torsion modules} The center $Z_0$ of $\ev$ for a torsion module $V$ is already known  \cite{Ka}. 
We will now describe the result in a form needed later and show that $x\mapsto x^*$ is an isomorphism of $Z_0$ onto
the center $Z$ of $\evv$.
Let $P$ be the set of all primes in $R$ 
and for each $p\in P$ let $V_p=\{\xi\in V:\, 
p^k(a)\xi=0\ 
{\rm for\ some}\ k\in\bn\}$, the {\em $p$-primary part} of $V$. It is well-known \cite{Ka} that 
\begin{equation}\label{80}V=\oplus_{p\in Q_V}V_p,\end{equation}
where $Q_V$ is the set of all $p\in P$ such that $V_p\ne0$. 

Let us first consider the case when there is only one summand in this decomposition, that is $V=V_p$ for some $p\in P$.
In this case $V$ is equal to the union of the increasing sequence of submodules $U_n:=\ker{p^n(a)}$. 
If $V=U_n$ for
some $n$ then $a$ is algebraic and therefore each $b\in\bcom{(a)}$ is of the form $b=f(a)$ for some polynomial $p$
by \cite[Exercise 90, p. 72]{Ka}. In this case $Z_0$ is isomorphic to $R/(p^n)$.
In the remaining case, when $U_n\ne V$ for all $n$, $Z_0$ is isomorphic to the ring $\hat{R}_p$ of all power series
of the form (\ref{50}). 

\begin{de}For each series $f$ of the form (\ref{50}) define $f(a)$ 
as follows.  Each $\xi\in V$ is in some $U_m$. Choose a polynomial $F_m\in R$ such that the coset of $F_m$ in
$R/(p^m)$ is equal to the image of $f$ under the map $\hat{R}_p=\lim_{\leftarrow}R/(p^n)\to R/(p^m)$ and then let
$$f(a)\xi:=F_m(a)\xi.$$
(We may simply take $F_m=\sum_{j=0}^{m-1}f_jp^j$, where the $f_j\in R$ are as in (\ref{50}).)
\end{de}

If $G_m$ is another such polynomial, then $G_m-F_m$ is divisible by $p^m$, hence $G_m(a)\xi=F_m(a)\xi$. Further,

if we enlarge $m$ to $m+1$, then $F_{m+1}$ and $F_m$ have the same coset in $R/(p^m)$, hence again $F_{m+1}(a)\xi=
F_m(a)\xi$. Thus $f(a)$ is well defined.
Since each $U_m$ is invariant under $\com{(a)}$, clearly  $f(a)\in\bcom{(a)}$.

It follows from known results (see \cite[Theorem 29]{Ka} or \cite[Proof of Theorem 19.7]{KMT}) that every  
$b\in\bcom{(a)}$ is of the form $f(a)$ for an $f$ as in (\ref{50}). Since all the proofs of this which we have found in
the literature require a more extensive knowledge of the structure theory of torsion modules than it is absolutely
necessary, we will sketch now a more direct argument.  

Clearly $\ev=\lim_{\leftarrow}{\rm L}_R(U_n)$, since each sequence of endomorphisms $g_n\in {\rm L}_R(U_n)$
satisfying $g_{n+1}|U_n=g_n$ defines an endomorphism $g\in\ev$. If we prove that every endomorphism
$b_n$ of the $R$-module $U_n$ extends to an endomorphism $b_{n+1}$ of $U_{n+1}$, then it follows easily that 
the center of $\ev$ is the inverse limit of the centers of ${\rm L}_R(U_n)$. These centers are isomorphic to $R/(p^nR)$ since $a|U_n$
is algebraic with the minimal polynomial $p^n$. (This is \cite[Exercise 88]{Ka}; perhaps the simplest proof is
by using the fact that $R/(p^n)$ is a self-injective ring by \cite[Corollary 3.13]{La}.) Now $U_{n+1}$ is a torsion module
with $p^{n+1}U_{n+1}=0$, hence by Pr\" ufer's theorem (which can again be seen as a consequence of self-injectivity
by \cite[Exercise 18, p. 115]{La}) $U_{n+1}$ is a direct sum of cyclic modules. The endomorphism rings of such modules
can be described quite explicitly. For example, if $U_{n+1}$ is cyclic, then so is $U_n$ and all endomorphisms of $U_{n+1}$
and $U_n$ are polynomials in $a$. In general, endomorphism of $U_{n+1}$ are suitable
matrices of homomorphisms between the cyclic summands of $U_{n+1}$. By a closer examination of this structure it can be verified that each endomorphism of
$U_n$ extends to $U_{n+1}$.

For a torsion module $V$ with the primary decomposition (\ref{80}) the center of $\ev$ is just the product
of the centers of ${\rm L}_R(V_p)$. Further, $V^*=\prod_{p\in Q_V}V_p^*$ and, since each central endomorphism of $V^*$ commutes
with the projections onto the factors $V_p^*$, the center $Z$ of $\evv$ is contained in the product of the centers $Z_p$ of 
${\rm L}(V_p^*)_R$. 

\begin{lemma}$Z=\prod_{p\in Q_V}Z_p$.
\end{lemma}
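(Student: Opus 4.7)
The plan is to show that every $R$-endomorphism $g$ of $V^*$ is automatically block diagonal with respect to the product decomposition $V^*=\prod_{p\in Q_V}V_p^*$; once this is established, the lemma follows by componentwise commutation.

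The inclusion $Z\subseteq\prod_{p\in Q_V}Z_p$ has already been argued in the paragraph preceding the lemma. For the reverse inclusion, I would take $(z_p)\in\prod_p Z_p$ and define $z\in{\rm L}(V^*)_R$ componentwise by $z((\rho_p)_p)=(z_p\rho_p)_p$; it is then enough to show that $zg=gz$ for every $g\in{\rm L}(V^*)_R$.

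The first ingredient is that for distinct primes $p\ne q$, multiplication by $q(a^*)$ is invertible on $V_p^*$: since $V_p$ is $p$-primary and $q$ is coprime to every power of $p$, the operator $q(a)$ is bijective on $V_p$, and bijectivity transfers to the dual. The second, and main, ingredient is the vanishing
\[\bigcap_{n\ge 1}q^n(a^*)V_q^*=0.\]
My plan here: take $\rho$ in the intersection and any $\xi\in V_q$; since $V_q$ is $q$-primary there is some $n$ with $q^n(a)\xi=0$, and writing $\rho=q^n(a^*)\rho_n$ yields $\inner{\rho}{\xi}=\inner{\rho_n}{q^n(a)\xi}=0$, forcing $\rho=0$. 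This vanishing is really the crux of the lemma; everything else is bookkeeping.

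Combining the two, for any $g\in{\rm L}(V^*)_R$, any $q\in Q_V$, and any $\sigma$ in the subproduct $\prod_{p\ne q}V_p^*$ (viewed as a subspace of $V^*$), the first ingredient allows writing $\sigma=q^n(a^*)\sigma'$ with $\sigma'$ in the same subproduct for every $n$; hence the $q$-coordinate of $g(\sigma)$ lies in $q^n(a^*)V_q^*$ for every $n$ and so vanishes by the second ingredient. Thus the $q$-coordinate of $g((\rho_p)_p)$ depends only on $\rho_q$, via the block $g_{qq}:=\pi_q\circ g\circ\iota_q\in{\rm L}(V_q^*)_R$. Since $z$ is block diagonal by construction, the identity $zg=gz$ reduces on each coordinate to $z_qg_{qq}=g_{qq}z_q$, which holds because $z_q\in Z_q$. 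The main obstacle I anticipate is precisely the vanishing of $\bigcap_n q^n(a^*)V_q^*$; once that is in place, no additional finiteness or boundedness hypothesis on the individual $V_p$'s is needed.
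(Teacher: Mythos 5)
Your proposal is correct and takes essentially the same route as the paper: you reduce the matter to showing that the $q$-coordinate of $g(\sigma)$ vanishes whenever $\sigma$ has zero $q$-coordinate, and you establish this using precisely the paper's two observations --- that multiplication by $q$ is invertible on $\prod_{p\ne q}V_p^*$ (so $\sigma$ is divisible by every $q^n$), and that $\bigcap_n q^n(a^*)V_q^*=0$ because $V_q$ is $q$-primary torsion. The paper phrases the first step as the nonexistence of nonzero homomorphisms $\prod_{p\ne q}V_p^*\to V_q^*$, but this is the same block-triangularity statement you argue directly.
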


\begin{proof}To prove the remaining inclusion $\prod_pZ_p\subseteq Z$, it suffices to show that for each $q\in Q_V$ 
there are no non-zero homomorphisms
$$\phi:W_q^*=\prod_{p\ne q,\, p\in Q_V}V_p^*\to V_q^*,\ \ \mbox{where}\ W_q:=\oplus_{p\ne q,\,p\in Q_V}V_p,$$
for then each endomorphism of $V$ decomposes into the cartesian product of endomorphisms of the factors $V_p$. 
That $\phi=0$ follows from the following two observations. (1) The multiplication by $q$ acts as an invertible operator on each 
$V_p$ for $p\ne q$,
hence also invertible on $W_q$ and on $W_q^*$; in particular elements of $W_q^*$ are divisible by $q$. (2) On the other
hand no nonzero element of $V_q^*$ can be divisible by all powers $q^n$ of $q$ since $V_q$ is torsion. 
(Indeed, let $\rho\in V_q^*$ be such that for each $n\in\bn$ there exists an $\omega_n\in V_q^*$ with $\rho=\omega_nq^n$.
Then, given $x\in V_q$,
we choose for $n$ the order of $x$ and conclude that $\rho(x)=\omega_n(q^nx)=\omega_n(0)=0$.)
\end{proof}

This reduces the study of the center to the case of just one primary summand, say $V=V_p$. Then as above $V$ is the union of
submodules $U_n=\ker p^n(a)$, hence $V^*=\lim_{\leftarrow}U_n^*$. Since $U_n^*=V^*/(\ker p^n(a))^{\perp}$ and $(\ker p^n(a))^{\perp}=
p^n(a^*)(V^*)$, we may write  $U_n^*=V^*/(p^nV^*)$, hence
$$V^*=\lim_{\leftarrow}V^*/(p^nV^*).$$
Here the connecting maps in the inverse system are the natural quotient maps $$\sigma_n:V^*/(p^{n+1}V^*)\to V^*/(p^nV^*).$$

Each endomorphism $\phi$ of $V^*$ maps $p^nV^*$ into itself, hence it defines and endomorphism $\phi_n$ of
$V^*/(p^nV^*)$ for every $n$. 

\begin{lemma}\label{le14}If  $\phi_n=0$ for all $n$ then $\phi=0$. Hence $\phi$ is just a sequence of endomorphisms
$\phi_n\in{\rm L}(V^*/(V^*p^n))_R$ which are compatible in the sense that $\sigma_n\phi_{n+1}=\phi_n\sigma_n$.
\end{lemma}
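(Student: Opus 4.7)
The plan is to exploit the identification $V^* = \lim_{\leftarrow}V^*/(p^nV^*)$ just established together with the $p$-primariness of $V = V_p$.

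First I would unpack the hypothesis: $\phi_n = 0$ means precisely that $\phi(V^*) \subseteq p^nV^*$, so requiring this for every $n$ gives $\phi(V^*) \subseteq \cap_{n=1}^{\infty} p^nV^*$. It thus suffices to show that this intersection is zero. Given $\rho \in \cap_n p^nV^*$ and any $x \in V$, I pick $m$ with $x \in U_m$ (available since $V = \cup_n U_n$), so that $p^m(a)x = 0$, and then write $\rho = p^m(a^*)\omega$ for some $\omega \in V^*$; this yields $\rho(x) = \omega(p^m(a)x) = 0$. Since $x$ was arbitrary $\rho = 0$, and hence $\phi = 0$.

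For the ``Hence'' clause I would observe that the assignment $\phi \mapsto (\phi_n)$ sends $\evv$ into the inverse limit $\lim_{\leftarrow}{\rm L}(V^*/(p^nV^*))_R$; the well-definedness of $\phi_n$ and the compatibility $\sigma_n\phi_{n+1} = \phi_n\sigma_n$ both follow from $\phi(p^nV^*) \subseteq p^nV^*$. Injectivity of this map is the content of the first statement, and surjectivity is a direct consequence of the universal property of the inverse limit applied through $V^* = \lim_{\leftarrow}V^*/(p^nV^*)$: any compatible sequence $(\phi_n)$ lifts to an endomorphism of $V^*$ inducing the prescribed $\phi_n$. I do not anticipate any real obstacle here; the sole conceptual step is recognising that $\cap_n p^nV^*$ vanishes, which is forced by the $p$-primariness of $V$ together with the duality identity $p^nV^* = (\ker p^n(a))^\perp$.
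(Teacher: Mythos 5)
Your proof is correct and matches the paper's approach: both reduce the injectivity claim to showing $\bigcap_n p^nV^* = 0$, which is established by the same evaluation argument using that $V$ is the union of the $U_m = \ker p^m(a)$. You additionally spell out the surjectivity of $\phi \mapsto (\phi_n)$ via the universal property of $V^* = \lim_{\leftarrow} V^*/(p^nV^*)$, which the paper leaves implicit in the ``Hence'' clause; this is a harmless and correct amplification.
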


\begin{proof}The identity $\phi_n=0$ means that $\phi(V^*)\subseteq p^nV^*$. Thus, if $\phi_n=0$ for all $n$, then
for each $\rho\in V^*$ the element $\phi(\rho)$ is divisible by all $p^n$. But since $V$ is $p$-primary, a similar simple
argument as in the previous lemma shows that $V^*$ contains no nonzero elements divisible by all $p^n$.
\end{proof}

\begin{lemma}\label{le15}Each endomomorphism $\psi_n\in{\rm L}(U_n^*)_R$ can be lifted to an endomorphism 
$\psi_{n+1}\in{\rm L}(U_{n+1}^*)_R$.
(That is, $\psi_n\sigma_n=\sigma_n\psi_{n+1}$.)
\end{lemma}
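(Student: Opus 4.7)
My plan is to use Pr\"ufer's theorem to decompose $U_{n+1}$ into cyclic torsion summands and then lift componentwise. Since $U_{n+1}$ is torsion with $p^{n+1}U_{n+1}=0$, Pr\"ufer's theorem---already invoked earlier in the paper, resting on the self-injectivity of $R/(p^{n+1})$---gives $U_{n+1}=\bigoplus_\alpha R/(p^{k_\alpha})$ with $1\le k_\alpha\le n+1$. The submodule $U_n=\ker p^n(a)$ then corresponds to $\bigoplus_\alpha R/(p^{m_\alpha})$ with $m_\alpha:=\min(k_\alpha,n)$, each summand embedded in the corresponding $R/(p^{k_\alpha})$ via multiplication by $p^{k_\alpha-m_\alpha}$.

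Dualizing, I would use the $R$-module isomorphism $(R/(p^k))^*\cong R/(p^k)$ (another consequence of the quasi-Frobenius nature of $R/(p^k)$) to identify $U_{n+1}^*\cong\prod_\alpha R/(p^{k_\alpha})$ and $U_n^*\cong\prod_\alpha R/(p^{m_\alpha})$, with $\sigma_n$ becoming the product of the natural surjections $R/(p^{k_\alpha})\to R/(p^{m_\alpha})$. For each index $\beta$, I would lift the $\beta$-th component of $\psi_n\circ\sigma_n:U_{n+1}^*\to R/(p^{m_\beta})$ through the quotient $R/(p^{k_\beta})\to R/(p^{m_\beta})$, producing an $R$-homomorphism $\tilde{\psi}^\beta:U_{n+1}^*\to R/(p^{k_\beta})$. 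The key tool here is again the self-injectivity of $R/(p^{n+1})$: embedding the target $R/(p^{m_\beta})$ into $R/(p^{n+1})$ as $p^{n+1-m_\beta}R/(p^{n+1})$ and invoking injectivity of $R/(p^{n+1})$ as a module over itself lets one extend the composed map, and a careful choice of extension lands inside the sub-$R$-module $R/(p^{k_\beta})\subseteq R/(p^{n+1})$. Assembling the $\tilde{\psi}^\beta$ yields $\psi_{n+1}$, and the identity $\sigma_n\psi_{n+1}=\psi_n\sigma_n$ then holds by construction.

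The main obstacle lies precisely in this componentwise lifting: the surjection $R/(p^{k_\beta})\to R/(p^{m_\beta})$ does \emph{not} split as $R$-modules, and a generic $R$-homomorphism from a product of cyclic torsion modules need not factor through any finite subproduct, so naive summandwise lifting is unavailable. The technical work consists in organizing the self-injective extensions so that the choices made for different indices $\beta$ are globally compatible, yielding a single well-defined $R$-linear endomorphism of $\prod_\alpha R/(p^{k_\alpha})$. Pr\"ufer's structural decomposition makes this tractable by replacing an abstract lifting problem (whose $\mathrm{Ext}^1$-obstruction does not vanish in general) with an explicit one involving multiplications by polynomial classes in the cyclic summands, where lifts of polynomial representatives modulo $p^{m_\alpha}$ to polynomial representatives modulo $p^{k_\alpha}$ always exist.
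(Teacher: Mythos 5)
The proposal has a genuine gap at the critical step, and the route chosen makes that step substantially harder than it needs to be. The paper does not decompose the primal module $U_{n+1}$ and then dualize; it applies Pr\"ufer's theorem directly to the dual $U_n^*=V^*/(p^nV^*)$ (legitimate because $U_n^*$ is of bounded $p$-torsion) to obtain a direct-sum decomposition $U_n^*=\oplus_j R(\xi_j+p^nV^*)$, and then checks that the \emph{same} generators $\xi_j$ give a direct-sum decomposition of $U_{n+1}^*=V^*/(p^{n+1}V^*)$, with $\sigma_n$ carrying each cyclic summand of the latter onto the corresponding summand of the former. With both duals presented as direct sums over the same index set, an endomorphism is a column-finite matrix of multiplications by polynomial classes, and the lift is performed entry by entry. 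By decomposing $U_{n+1}$ and dualizing you instead land on $U_{n+1}^*\cong\prod_\alpha R/(p^{k_\alpha})$, an infinite product. A map out of an infinite product need not be determined by its restrictions to finitely many coordinates, so you lose the column-finite matrix structure that makes the lift manageable; you are on the wrong side of the direct-sum versus product divide for a lifting problem.

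Moreover, the mechanism you offer for the component lift is not an argument, and the diagnosis of where the difficulty lies is misplaced. Self-injectivity of $R/(p^{n+1})$ yields extensions of maps defined on submodules of $U_{n+1}^*$, not lifts through the surjection $R/(p^{k_\beta})\to R/(p^{m_\beta})$ in the target; the assertion that ``a careful choice of extension lands inside'' $R/(p^{k_\beta})$ is precisely what needs proof, and it is not at all clear when the source is an infinite product. Meanwhile the ``global compatibility'' you flag as the main technical hurdle is automatic: by the universal property of the product, any family of $R$-linear maps $\tilde\psi^\beta\colon U_{n+1}^*\to R/(p^{k_\beta})$ assembles into a unique $R$-linear map $\psi_{n+1}\colon U_{n+1}^*\to\prod_\beta R/(p^{k_\beta})$, and the identity $\sigma_n\psi_{n+1}=\psi_n\sigma_n$ then holds coordinate by coordinate by construction. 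So the entire burden falls on the existence of each individual $\tilde\psi^\beta$, which the proposal leaves unestablished and which the paper's choice of a \emph{direct-sum} decomposition of $U_n^*$ is specifically designed to circumvent.
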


\begin{proof}The module $U_n^*=V^*/(p^nV^*)$ is $p^n$-torsion, hence  a direct sum of cyclic modules by \cite[pp. 17, 36]{Ka}, 
say $U_n^*=\oplus_{j\in J}R(\xi_j+p^nV^*)$, 
where $\xi_j\in V^*$
and the module $R(\xi_j+p^nV^*)$ is isomorphic to $R/(p^{n_j})$ for some $n_j\in\bn$. 
Then for each $j$ the cyclic submodule $R(\xi_j+p^{n+1}V^*)$ of
$V^*/(p^{n+1}V^*)$ is isomorphic to either $R/(p^{n_j})$ or to $R/(p^{n_j+1})$. This follows from 
$p^{n_j+1}\xi_j\in p^{n+1}V^*$ (since $p^{n_j}\xi_j\in p^nV^*)$
and  $p^{n_j-1}\xi\notin p^{n+1}V^*$ (otherwise we would have $p^{n_j-1}\xi\in p^{n+1}V^*\subseteq p^nV^*$, 
but then $R(\xi_j+p^nV^*)$ would not
be isomorphic to $R/(p^{n_j})$). Further, the sum $\sum_jR(\xi_j+p^{n+1}V^*)$ in $V^*/(p^{n+1}V^*)$ is direct 
(since its image in $V^*/(p^nV^*)$ is direct)
and equal to $U_{n+1}^*=V^*/(p^{n+1}V^*)$. To show this, note that $V^*=\sum_jR\xi_j+p^nV^*$ (since $U_n^*=\oplus_jR(\xi_j+p^nV^*)$), hence by iteration
$V^*=\sum_jR\xi_j+p^n(\sum_jR\xi_j+p^nV^*)\subseteq\sum_jR\xi_j+p^{2n}V^*\subseteq\sum_jR\xi_j+p^{n+1}V^*$. 
Now observe that each homomorphism
$\phi:R\xi_1+p^nV^*\to R\xi_2+p^nV^*$ is a multiplication by a polynomial $q$, such that $p^{n_2-n_1}|q$ if $n_2>n_1$, 
and can be lifted to a homomorphism
$\psi:R\xi_1+p^{n+1}V^*\to R\xi_2+p^{n+1}V^*$. Indeed, we can take for $\psi$ the multiplication by $q$, except in the case when $R\xi_1+p^{n+1}V^*$ is
isomorphic to $R/(p^{n_1+1})$ and $R\xi_2+p^{n+1}V^*$ is isomorphic to $R/(p^{n_2})$ since in this case $p^{n_2-n_1+1}$ 
does not necessarily divide $q$.
But in this case $R\xi_2+p^nV^*$ and $R\xi_2+p^{n+1}V^*$ are both isomorphic to $R/(p^{n_2})$, hence to each other by the homomorphism
$\tau$ sending $\xi_2+p^nV^*$ to $\xi_2+p^{n+1}V^*$. Then  we can take for $\psi$ the composition of the
natural map $R\xi_1+p^{n+1}V^*\to R\xi_1+p^nV^*$ followed by $\phi$ followed by $\tau$. Since an endomorphism $\psi_n$ of
$U_n^*$ is just a suitable matrix of endomorphisms between its cyclic summands, it follows that $\psi_n$ can be lifted 
to an endomorphism of $U_{n+1}^*$.
\end{proof}

Since an endomorphism $\phi$ of $V^*$  is just a sequence $(\phi_n)$ of compatible endomorphisms 
$\phi_n:U_n^*\to U_n^*$ by Lemma \ref{le14} and all such endomorphisms can be lifted by Lemma \ref{le15}, we deduce 
that the center $Z$ of $\evv$ is just the
inverse limit of the centers of ${\rm L}(U_n^*)_R$. The center of  ${\rm L}(U_n^*)_R$ is isomorphic to $R/(p^{m_n})$, where 
$m_n$ is the order of $U_n^*$ (the smallest natural number such that $p^{m_n}U_n^*=0$). Since $U_m^*$ has the
same order as $U_m$, we may state the following proposition.

\begin{pr}\label{pr11}If $V$ is a torsion $R$-module, then the map $z\mapsto z^*$ is an isomorphism from the center $Z_0$ of $\ev$ to the center $Z$ of
$\evv$. More precisely, if $V$ is $p$-primary for a prime $p\in R=\f[t]$, then $Z$ and $Z_0$ are both isomorphic to either the algebra
$R/(p^n)$ (where $n\in\bn$ is the minimal such that $p^nV=0$ if such a $n$ exists) or to $\hat{R}_p$ (if $V$ is not of bounded torsion).
\end{pr}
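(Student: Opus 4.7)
The plan is to deduce Proposition 3.11 by combining the primary reduction already established with Lemmas 3.14 and 3.15, expressing both $Z_0$ and $Z$ as the same inverse limit and checking that the map $z\mapsto z^*$ realises this identification diagonally.

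First I reduce to a single primary component. The primary decomposition $V=\oplus_{p\in Q_V}V_p$ gives $\ev=\prod_p{\rm L}_R(V_p)$ and $V^*=\prod_pV_p^*$, so $\evv=\prod_p{\rm L}(V_p^*)_R$; hence $Z_0=\prod_pZ_0(V_p)$ and, by the lemma immediately preceding, $Z=\prod_pZ_p$. Adjunction is diagonal with respect to these decompositions, so it suffices to prove the statement in the $p$-primary case $V=V_p$ for a single prime $p$.

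Next, assume $V=V_p=\bigcup_n U_n$ with $U_n=\ker p^n(a)$. Lemma \ref{le14} says an endomorphism $\phi$ of $V^*$ is completely determined by the induced sequence $\phi_n\in{\rm L}(U_n^*)_R$ (where $U_n^*=V^*/(p^nV^*)$), and Lemma \ref{le15} shows every compatible such sequence actually lifts. Together these yield the identifications
$$\evv=\lim_{\leftarrow}{\rm L}(U_n^*)_R,\qquad Z=\lim_{\leftarrow}Z({\rm L}(U_n^*)_R),$$
the second equality holding because taking centres commutes with inverse limits of algebras along the connecting maps $\sigma_n$. For the corresponding description of $Z_0$ the same scheme applied on the $V$-side was already carried out in the paragraph on torsion modules, giving $Z_0=\lim_{\leftarrow}Z({\rm L}_R(U_n))$.

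Now I compute each centre. Since $U_n$ is $p^{m_n}$-torsion with $a|U_n$ of minimal polynomial $p^{m_n}$, the cited fact $Z({\rm L}_R(U_n))\cong R/(p^{m_n})$ holds; by the duality $(\ker p^k(a))^{\perp}=p^k(a^*)V^*$ already recorded, $U_n^*=V^*/(p^nV^*)$ has the same order $m_n$ as $U_n$, and the same result yields $Z({\rm L}(U_n^*)_R)\cong R/(p^{m_n})$. The connecting maps on both sides become the natural quotients $R/(p^{m_{n+1}})\to R/(p^{m_n})$. Taking the limit: if $p^nV=0$ for some minimal $n$ then the sequence $m_n$ stabilises at $n$, giving $Z_0\cong Z\cong R/(p^n)$; otherwise $m_n\to\infty$ and both limits equal $\hat{R}_p$.

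Finally, to see that the algebra isomorphism $Z_0\to Z$ is in fact implemented by $z\mapsto z^*$, observe that for $f\in\hat{R}_p$ the operator $f(a)\in Z_0$ acts on $U_n$ as the polynomial $F_n=\sum_{j=0}^{n-1}f_jp^j$, and $F_n(a)^*=F_n(a^*)$ induces multiplication by the class of $F_n$ on $U_n^*=V^*/(p^nV^*)$, which is precisely the image of $f$ in $R/(p^{m_n})$ under the inverse limit presentation of $Z$. The main subtlety is the step identifying the orders of $U_n$ and $U_n^*$ and checking that the lifting lemma really delivers the inverse limit presentation of the centre (not only of the whole endomorphism algebra); once that is in place, matching the two inverse systems and the adjoint map is routine.
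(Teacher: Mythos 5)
Your proposal tracks the paper's proof step for step: primary reduction to $V=V_p$, the inverse-limit presentation $Z\cong\lim_{\leftarrow}Z({\rm L}(U_n^*)_R)$ obtained from Lemmas \ref{le14} and \ref{le15}, the computation of each level's center as $R/(p^{m_n})$ using the order of $U_n^*$, and the observation that $z\mapsto z^*$ realises the diagonal identification of the two inverse systems. Two small imprecisions in the write-up are worth noting, though neither is fatal. First, ``$V^*=\prod_pV_p^*$, so $\evv=\prod_p{\rm L}(V_p^*)_R$'' does not follow from the product decomposition of $V^*$ alone (the endomorphism ring of an infinite direct product of modules need not be the product of the endomorphism rings); the paper only asserts the inclusion $Z\subseteq\prod_pZ_p$ from commuting with projections and then proves the reverse inclusion via the lemma you cite, which shows ${\rm L}(W_q^*,V_q^*)_R=0$ -- that lemma does in fact force the full decomposition $\evv=\prod_p{\rm L}(V_p^*)_R$, but it should be presented as a consequence of the lemma, not of the decomposition. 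Second, Lemma \ref{le15} is not about compatible sequences lifting (that a compatible sequence $(\phi_n)$ gives an endomorphism of $V^*=\lim_{\leftarrow}U_n^*$ is automatic); it shows that each individual $\psi_n\in{\rm L}(U_n^*)_R$ extends to some $\psi_{n+1}$, and it is exactly this per-level surjectivity that yields the nontrivial inclusion $Z(\lim_{\leftarrow}{\rm L}(U_n^*)_R)\subseteq\lim_{\leftarrow}Z({\rm L}(U_n^*)_R)$, which you rightly flag in your final paragraph as the subtle step.
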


\subsection{Mixed modules}
To describe the center $Z$ of $\evv$ for a general $R$-module $V$, let $T$ be the torsion submodule of $V$ and $W=V/T$. 
Assume that $W\ne0$. Since $W$ is torsion-free, $W^*$ is divisible, hence $V^*\cong T^*\oplus W^*$. Each central endomorphism $b$ of $V^*$ 
commutes in particular  with the  projections
of $V^*$ onto the two summands $T^*$ and $W^*$, therefore it must be of the form 
$$b=f\oplus r$$ for some 
endomorphisms $f$ and $r$ of $T^*$ and $W^*$ (respectively). By Proposition \ref{pr4}  $r$ is essentially a 
rational function.
Let $T=\oplus_{q\in Q_T}T_q$ be the decomposition of $T$  into its primary summands, and let $f_q\in{\rm Z}({\rm L}(T_q)^*)$
be the components of $f$. (So $f_q\in\hat{R}_q$ or $f_q\in R/(q^n)$ by Proposition \ref{pr11}.) Decompose $W^*$ into the direct sum of
indecomposable injective submodules of the form $K$ and $R(p^{\infty})$, let $g_p\in {\rm L}(T^*,R(p^{\infty}))_R$ and $g_0\in{\rm L}(T^*,K)_R$ be 
arbitrary, 
and denote $g_{p,q}=g_p|T_q^*$.
Then (since $b\in Z$)
\begin{equation}\label{90}rg_{p,q}=g_{p,q}f_q\ \ \mbox{for all}\ g_{p,q}\in{\rm L}(T_q^*,R(p^{\infty}))_R\ \mbox{or}
\ g_{0,q}\in{\rm L}(T_q^*,K)_R.\end{equation}

We now study the question:

\smallskip
{\em For which $q\in Q_T$ is it possible that $f_q\ne r$ in $\hat{R}_q$ (or in $R/(q^{n(q)})$) 
in spite of the condition (\ref{90})?} 

\begin{lemma}\label{le16} Let $q\in Q_T$, $r\in K$ and $\phi\in\hat{R}_q$. If $T_q$ contains a nonzero divisible submodule
or, if some nonzero quotient module of $T_q$ is divisible, then the condition
$$r\psi=\psi\phi\ \ \mbox{for all}\ \psi\in{\rm L}(T_q^*,K)_R$$ 
implies that $r=\phi$ in $\hat{R}_q$.

The same conclusion also holds if $T_q$ is a direct sum of cyclic modules, $T_q=\oplus_{j\in J} R/(q^{n(j)})$, such
that the set $N_q:=\{n(j):\, j\in J\}$ is not bounded. 
\end{lemma}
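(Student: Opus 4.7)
The plan is to reduce the hypothesis to an annihilation statement for $u-v\phi$ on $T_q^{*}$ modulo torsion (with $r=u/v$ in lowest terms), and then to locate in each of the three cases a torsion-free $\hat{R}_q$-submodule of $T_q^{*}$ on which $\hat{R}_q$ acts faithfully. This will force $u-v\phi=0$ in $\hat{R}_q$; coprimality of $u,v$ will then rule out $q\mid v$, giving $\phi=\iota(r)$.

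Write $r=u/v$ with $u,v\in R$ coprime and $v\ne 0$. Multiplying $r\psi(x)=\psi(\phi x)$ by $v$ and using $R$-linearity of $\psi$ yields
\[
\psi\bigl((u-v\phi)x\bigr)=0\quad\text{for all}\ \psi\in{\rm L}(T_q^{*},K)_R,\ x\in T_q^{*}.
\]
Every non-torsion $y\in T_q^{*}$ generates a free cyclic submodule $Ry\cong R$, whose inclusion $Ry\hookrightarrow K$ (sending $y\mapsto1$) extends, by injectivity of $K$, to a map $T_q^{*}\to K$ nonvanishing at $y$. Thus the intersection of the kernels of all $\psi\in{\rm L}(T_q^{*},K)_R$ equals the torsion submodule $\tau$ of $T_q^{*}$, and so $(u-v\phi)T_q^{*}\subseteq\tau$.

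In each case I would now exhibit a submodule $Q\subseteq T_q^{*}$ isomorphic to $\hat{R}_q$ as an $\hat{R}_q$-module. If $T_q$ contains a nonzero divisible submodule, it contains a copy $D$ of $R(q^{\infty})$ which, being injective, is a direct summand of $T_q$; dualizing yields $Q:=D^{*}\cong\hat{R}_q$ as a direct summand of $T_q^{*}$. If a nonzero quotient $T_q/M$ is divisible, then since every $R$-submodule of $T_q$ is preserved by each polynomial in $a$ and $\phi$ acts on each $\ker q^m(a)$ as such a polynomial, $\phi$ descends to an $\hat{R}_q$-endomorphism of $T_q/M$; picking a summand $R(q^{\infty})$ of $T_q/M$ gives a surjection $T_q\twoheadrightarrow R(q^{\infty})$ whose dual embeds $Q\cong\hat{R}_q$ into $T_q^{*}$ $\hat{R}_q$-linearly. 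In the third case, pass to a subsequence with $n_k\to\infty$ and let $\rho\in T_q^{*}=\prod_j R/(q^{n(j)})$ have component $1+(q^{n_k})$ at the chosen indices and $0$ elsewhere; since $\bigcap_k q^{n_k}\hat{R}_q=0$, the map $\phi'\mapsto\phi'\rho$ is an injective $\hat{R}_q$-linear embedding $\hat{R}_q\hookrightarrow T_q^{*}$, so $Q:=\hat{R}_q\rho$ works.

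In every case $Q$ is torsion-free, so $(u-v\phi)Q\subseteq Q\cap\tau=0$, and faithfulness of the action on $Q\cong\hat{R}_q$ forces $u-v\phi=0$ in $\hat{R}_q$. Coprimality of $u$ and $v$ rules out $q\mid v$ (otherwise $u=v\phi\in q\hat{R}_q$ would force $q\mid u$), so $v$ is a unit in $\hat{R}_q$ and $\phi=\iota(u/v)=\iota(r)$. The most delicate step is the divisible-quotient case, where one has to verify that the $\hat{R}_q$-action on $T_q^{*}$ is compatible with the embedding $(T_q/M)^{*}\hookrightarrow T_q^{*}$ induced by the quotient map; everything else reduces to the concrete identification of $R(q^{\infty})^{*}$ with $\hat{R}_q$ established in the preliminaries.
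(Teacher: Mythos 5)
Your proof is correct, and it takes a genuinely different route from the paper's. Where the paper constructs, in each case, a \emph{surjection} $\pi:T_q^*\to\hat{R}_q$ (or an explicit copy of $\hat{R}_q$ inside $T_q^*$), builds a specific $\psi=h\pi$ using injectivity of $K$, arrives at the identity $rh(\eta)=h(\phi\eta)$ on all of $\hat{R}_q$, and then shows $\phi\in K\cap\hat{R}_q$ by constructing extensions $h$ that would send $\phi$ to arbitrary rational values otherwise, you instead reduce the hypothesis \emph{once and for all} to the clean statement $(u-v\phi)T_q^*\subseteq\tau$ (where $r=u/v$ in lowest terms and $\tau$ is the torsion submodule of $T_q^*$), by observing that the common kernel of all $\psi\in{\rm L}(T_q^*,K)_R$ is exactly $\tau$. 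You then only need an \emph{embedding} $Q\cong\hat{R}_q\hookrightarrow T_q^*$ of $\hat{R}_q$-modules, which is the easier direction to produce: in case (a) it is a direct summand, in case (b) the dual of a quotient map, in case (c) the map $\phi'\mapsto\phi'\rho$ for a suitably chosen $\rho$, using $\bigcap_k q^{n_k}\hat{R}_q=0$. Since $\hat{R}_q$ is torsion-free as an $R$-module and the action on itself is faithful, $(u-v\phi)Q\subseteq Q\cap\tau=0$ gives $u-v\phi=0$; coprimality of $u,v$ then forces $q\nmid v$ so $\phi=\iota(u/v)$. This avoids the paper's extension argument entirely (the passage showing $\phi\in K\cap\hat{R}_q$ by picking $h$ with $h(\phi)=r_0\ne r$) and also avoids the explicit power-of-$q$ shifts $\iota_k$ used in the paper's case (c). The trade-offs are modest: your argument leans a little more on structural facts about $\hat{R}_q$ (local domain, torsion-free over $R$, $\bigcap q^n\hat{R}_q=0$) and requires checking in each case that $Q$ is genuinely an $\hat{R}_q$-submodule (which holds because any $R$-linear map of $q$-primary torsion modules, and hence its adjoint, is automatically $\hat{R}_q$-linear); the paper's argument, while more laborious, stays closer to first principles about extending maps. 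Your parenthetical in case (b) about $\phi$ "descending" is not really what is needed — the relevant point is just that the dual of the surjection $T_q\twoheadrightarrow R(q^\infty)$ is an $\hat{R}_q$-linear embedding, which follows as above — but that does not affect the validity of the argument.
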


\begin{proof} If $T_q$ contains a nonzero divisible submodule, then it contains an indecomposable, necessarily $q$-torsion,
such module, hence $R(q^{\infty})$. Now $R(q^{\infty})=\lim_{\rightarrow}R/(q^n)$, where the maps in the direct system are 
the injections $\mu_n:R/(q^n)\to R/(q^{n+1})$ induced by the multiplication by $q$, hence we have the inverse system
$\mu_n^*:(R/(q^{n+1}))^*\to (R/(q^n))^*$ with $\mu_n^*$ surjective. 
For a fixed $n$ let $\omega_n\in(R/(q^n))^*$ be such that
$\omega_n(q^{n-1}(1+(q^n))=1$. Then $\omega_n$ is a cyclic vector in $(R/(q^n))^*$ (since $\omega_nq^{n-1}\ne0$), so it defines an isomorphism  $\theta_n$ from $(R(q^n))^*$ to
$R/(q^n)$ such that $\theta_n(\omega_n)=1+(q^n)$. Further, if we define $\omega_{n+1}\in(R/(q^{n+1}))^*$ to be an extension of $\omega_n\mu_n^{-1}$,
then we will have that $\omega_{n+1}(q^n+(q^{n+1}))=\omega_n(q^{n-1}+(q^n))=1$. If $\sigma_n:R/(q^{n+1})\to R/(q^n)$ is the map induced by the reduction 
of polynomials modulo $(q^n)$ (note that such are the maps in the inverse system defining $\lim_{\leftarrow}R/(q^n)=\hat{R}_q$), then $\sigma_n\theta_{n+1}=
\theta_n\mu_n^*$. Thus we can inductively define a compatible sequence $(\theta_n)$ of $R$-module isomorphisms $\theta_n:(R/(q^n))^*\to R/(q^n)$,
which then induces an isomorphism $\theta$ from
$(R(q^{\infty}))^*=\lim_{\leftarrow}(R/(q^n))^*$ onto $\lim_{\leftarrow}R/(q^n)=\hat{R}_q$. Observe that $\theta$ is then
also a homomorphism of $\hat{R}_q$ modules. Let $\pi:T_q^*\to \hat{R}_q$ 
be the composition $\pi=\theta\pi_0$, where $\pi_0:T_q^*\to (R(q^{\infty}))^*$ is the quotient map
(the adjoint of the inclusion 
$R(q^{\infty})\to T_q$). Since $K$ is injective, the inclusion $K\cap\hat{R}_q\to K$ can be
extended to an $R$-module homomorphism $h:\hat{R}_q\to K$. Let $\psi=h\pi$. Then, from the condition $r\psi=\psi\phi$
we compute for every $\xi\in T_q^*$, since $\pi$ is an $\hat{R}_q$-module homomorphism,
$$rh\pi(\xi)=r\psi(\xi)=\psi(\phi\xi)=h\pi(\phi\xi)=h(\phi\pi(\xi)).$$
Since $\pi$ is surjective, this implies that
\begin{equation}\label{36}rh(\eta)=h(\phi\eta)\ \ \mbox{for all}\ \eta\in\hat{R}_q.\end{equation}
If we take $\eta=1$, we get (since $h|K\cap\hat{R}_q$ acts as the identity) 
\begin{equation}\label{64}r=h(\phi).\end{equation}
If we can show that $\phi\in K\cap\hat{R}_q$, then $h(\phi)=\phi$ and we will have $r=\phi$ as claimed. Suppose that
$\phi\notin K\cap{R}_q$, that is $\phi\notin K$. Thus $z\phi\ne s$ for all nonzero $z\in R$ and $s\in K$,
hence each element of $(K\cap\hat{R}_q)+R\phi$ can be uniquely expressed as $s+z\phi$ ($s\in K\cap\hat{R}_q$, $z\in R$). But then, for any
$r_0\in K$, we can first extend
the inclusion $K\cap\hat{R}_q\to K$  to the $R$-module map $h_0:(K\cap\hat{R}_q)+R\phi\to K$ by
$h_0(s+z\phi)=s+zr_0$, and then further extend $h_0$ to an $R$-module map $h:\hat{R}_q\to K$. For such an $h$ we have $h(\phi)=r_0$, which
contradicts (\ref{64}) if we choose $r_0\ne r$. 

If a nonzero quotient $D$ of $T_q$ is divisible, then $R(q^{\infty})$ must be a quotient of $T_q$ (since $R(q^{\infty})$
is a direct summand, hence also a quotient, of $D$), hence $(R(q^{\infty}))^*$
is a submodule of $T_q^*$. With $\theta$ and $h$ as in the previous paragraph, let now $\psi:T_q^*\to K$ be an
$R$-module homomorphic extension of $h\theta$. Now the equality $r\psi(\xi)=\psi(\phi\xi)$ holds in particular
for all $\xi\in(R(q^{\infty}))^*$ and, since $\theta$ is an isomorphism of $\hat{R}_q$-modules, this implies
that $rh(\eta)=h(\phi\eta)$ for all $\eta\in\hat{R}_q$. The argument from the previous paragraph (following (\ref{36})) shows now that
$r=\phi$ in $\hat{R}_q$.

If $T_q$ is a direct sum of cyclic modules, as in the second part of the lemma, such that $N_q$ is not bounded,
choose a sequence $(j_k)_{k\in\bn}\subseteq J$ such that $n(j_k)\geq k$ for all $k\in\bn$. 
Then we have natural $\hat{R}_q$-module monomorphisms $\iota_k:R/(q^k)\to R/(q^{n(j_k)})$ (multiplications by suitable powers of $q$), 
which induce an embedding
$\iota:\prod_{k\in\bn}R/(q^k)\to\prod_{k\in\bn}R/(q^{n(j_k)})$. There is also a natural embedding $\kappa:\hat{R}_q\to
\prod_{k\in\bn}R/(q^k)$ of $\hat{R}_q$-modules, given by 
$$\kappa(\sum_{i=0}^{\infty}c_iq^i)=([c_0],[c_0+c_1q],\ldots,[\sum_{i=0}^nc_iq^i],\ldots)\ \ (c_i\in R,\ {\rm degree}\, 
c_i<{\rm degree}\, q).$$
Finally, since $(j_k)_{k\in\bn}$ is a subset of $J$, we have the obvious embedding 
$$\tau:\prod_{k\in\bn}R/(q^{n(j_k)})\rightarrow \prod_{j\in J}R/(q^{n(j)}).$$ Let $\sigma:\hat{R}_q\to T_q^*$ be the 
composition $\sigma=\theta^{-1}\tau\iota\kappa$, where the isomorphism $$\theta:T_q^*=\prod_{j\in J}(R/(q^{n(j)}))^*\to 
\prod_{j\in J}R/(q^{n(j)})$$ is defined by some compatible family of isomorphisms $R/(q^{n(j)})^*\to
R/(q^{n(j)})$ of $\hat{R}_q$-modules (as in the first paragraph of this proof). Since all these are $\hat{R}_q$-module maps, we may regard $\hat{R}_q$ 
as a submodule in $T_q^*$. Now the proof can be completed by the argument from the first paragraph of this proof (following
(\ref{36})) by considering for $\psi$ an extension to $T_q^*$ of the map $h:\hat{R}_q\to K$, where $h$ is an appropriate extension
of the inclusion $\hat{R}_q\cap K\to K$.
\end{proof}

If $T_q$ does not have any nonzero divisible submodule, then there exists in $T_q$ a submodule $B_q$ such that
$B_q$ is a direct sum of cyclic modules and $T_q/B_q$ is divisible. (This follows from Kulikov's theorem \cite [4.3.4]{Rob} for $\bz$-modules,
the proof for $\f[t]$-modules is essentially the same.) So, unless $T_q=B_q$, $T_q$ has a nonzero divisible
quotient and therefore by Lemma \ref{le16} the condition (\ref{90}) implies that $f_q=r$. In the remaining case,  
$T_q=B_q$, $T_q$ is a direct sum of cyclic modules as in the second part of Lemma \ref{le16}, hence, if the set
$N_q$ is not bounded (\ref{90}) again implies that $f_q=r$. 
On the other hand, if the set 
$N_q$ is bounded, say by the least upper bound $n(q)$, 
then $q^{n(q)}T_q=0$ implies that $T_q^*$ is a torsion module, hence there can be no nonzero module maps from $T_q^*$ to 
$K$. But there are nonzero such maps from $T_q^*$ to $R(p^{\infty})$ if $p=q$ for some $p\in P_W$, where $P_W$ is the 
set of all primes $p\in R$ such that $W^*$ contains $R(p^{\infty})$. Namely, in this case  $R(q^{\infty})$
and $T_q^*$ each contains its own  copy of $R/(q^{n(q)})$ and if we choose $g_{q,q}$ so that it identifies these
two copies isomorphically, then we see easily that (\ref{90}) implies that $f_q=r$ modulo
$(q^{n(q)})$. This proves:

\begin{lemma} Let $q\in Q_T$ and $r\in K$ be as above (so that (\ref{90}) holds). If $f_q\ne r$, then
$q$ is necessarily in the subset $S$ of $Q_T\setminus P_W$ consisting of those $q$ for which $T_q$ is a direct sum 
of cyclic modules of bounded orders; let $n_q$ be the order of $T_q$.
\end{lemma}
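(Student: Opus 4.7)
The plan is to argue by contrapositive: assume $q$ is \emph{not} in $S$, and show that (\ref{90}) forces $f_q=r$. I would split into cases according to the structure of $T_q$, essentially crystallizing the analysis made in the paragraph preceding the statement.

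First, if $T_q$ contains a nonzero divisible submodule, then by the first half of Lemma \ref{le16} the condition $r\psi=\psi f_q$ for all $\psi\in{\rm L}(T_q^*,K)_R$, which is a special case of (\ref{90}) (take $g_{0,q}=\psi$ and note that $K$ is a summand of $W^*$), immediately gives $f_q=r$ in $\hat{R}_q$. Otherwise, I would invoke Kulikov's theorem (for $R=\f[t]$, with proof identical to the $\bz$ case \cite[4.3.4]{Rob}) to obtain a submodule $B_q\subseteq T_q$ that is a direct sum of cyclic modules and such that $T_q/B_q$ is divisible. If $T_q\ne B_q$, then $T_q$ has a nonzero divisible quotient, and the second clause of Lemma \ref{le16} again forces $f_q=r$.

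It remains to handle $T_q=B_q$, i.e.\ $T_q=\oplus_{j\in J}R/(q^{n(j)})$. If the set $N_q=\{n(j):j\in J\}$ is unbounded, then the last clause of Lemma \ref{le16} delivers $f_q=r$ one more time. The only remaining case is when $N_q$ is bounded, with least upper bound $n(q)$, so that $q^{n(q)}T_q=0$; then $T_q^*$ is itself $q^{n(q)}$-torsion and so ${\rm L}(T_q^*,K)_R=0$. At this point (\ref{90}) only carries nontrivial content through the homomorphisms $g_{p,q}\colon T_q^*\to R(p^\infty)$. If $q\notin P_W$, no such summand $R(q^\infty)$ is available in $W^*$ and the condition is vacuous, so indeed $f_q\ne r$ is possible and $q\in S$ as claimed.

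To close the contrapositive I still need to rule out $q\in P_W$ in this bounded case. The key step is to construct an explicit $g_{q,q}\in{\rm L}(T_q^*,R(q^\infty))_R$ that detects equality modulo $(q^{n(q)})$. Picking a summand $R/(q^{n(q)})$ in $T_q$ (which exists because $n(q)$ is attained by some $n(j)$, at least after passing to the supremum; if it is not attained, $N_q$ would be unbounded or one could use a slightly smaller exponent, which is enough because orders of $T_q$ and $T_q^*$ agree), and noting that $R(q^\infty)$ contains a canonical copy of $R/(q^{n(q)})$, I would let $g_{q,q}$ be the projection of $T_q^*$ onto its corresponding summand composed with this inclusion into $R(q^\infty)$. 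Plugging this $g_{q,q}$ into (\ref{90}) yields $r\equiv f_q\pmod{q^{n(q)}}$, i.e.\ $f_q=r$ in $R/(q^{n(q)})$, contradicting the hypothesis $f_q\ne r$.

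The main obstacle I expect is purely bookkeeping: making the identification of $f_q$ with an element of $R/(q^{n(q)})$ precise (using Proposition \ref{pr11} applied to the $p$-primary summand $T_q$ of bounded torsion) and checking that the constructed $g_{q,q}$ really is an $R$-module map whose interaction with $f_q$ and $r$ reduces, through the $\hat{R}_q$-module isomorphism $\theta$ from Lemma \ref{le16}, to the congruence claimed. Everything else is a routine application of Lemma \ref{le16} and Kulikov's theorem.
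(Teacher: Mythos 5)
Your proof is correct and follows essentially the same route as the paper: Kulikov's theorem splits off the bounded‑cyclic case, the three clauses of Lemma \ref{le16} dispose of the divisible submodule, divisible quotient, and unbounded‑$N_q$ cases, and the remaining bounded case is handled by noting that ${\rm L}(T_q^*,K)_R=0$ and constructing an explicit $g_{q,q}$ into $R(q^\infty)$ when $q\in P_W$. One small remark: your hedge about $n(q)$ possibly not being attained is unnecessary, since $N_q$ is a bounded nonempty subset of $\mathbb{N}$ and therefore automatically contains its supremum.
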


Finally, let 
$S_b=\{q\in  S:\, f_q\ne r\ \mbox{in}\ R/(q^{n(q)})\}$. 

\begin{lemma}  The set $S_b$ is  finite.
\end{lemma}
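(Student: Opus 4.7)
The plan is to derive a contradiction from the assumption that $S_b$ is infinite by producing a single $R$-linear map from $T^*$ into $W^*$ that records the discrepancies $f_q \ne r$ on infinitely many coordinates at once. The key structural insight is that, although no individual summand $T_q^*$ with $q \in S$ admits a nonzero $R$-linear map to $W^*$, the quotient of the direct product $\prod_{q \in S_b} T_q^*$ by its torsion submodule $\bigoplus_{q \in S_b} T_q^*$ is a $K$-vector space, and does admit plenty of $R$-linear maps to $W^*$.

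Assume $S_b$ is infinite. For each $q \in S_b$ I would pick $e_q \in T_q^*$ generating a cyclic submodule isomorphic to $R/(q^{n(q)})$ (the image of $1+(q^{n(q)})$), and let $v_0 \in T^*$ have $q$-component $e_q$ for $q \in S_b$ and zero elsewhere. Set $\bar M := \big(\prod_{q \in S_b} T_q^*\big) / \big(\bigoplus_{q \in S_b} T_q^*\big)$. I would verify that $\bar M$ is torsion-free (any nonzero $r \in R$ has only finitely many prime divisors, so cannot annihilate a class whose representative has infinitely many nonzero components) and divisible (for any prime $p \in R$, multiplication by $p$ is invertible on $T_q^*$ for every $q \ne p$, and the single $q=p$ contribution lands in $\bigoplus T_q^*$ after dividing componentwise). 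Hence $\bar M$ is a $K$-vector space, and the class $\bar v$ of $v_0$ is nonzero because $S_b$ is infinite.

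By Proposition \ref{pr4} applied to the nonzero torsion-free module $W$, the module $W^*$ contains a direct summand isomorphic to $K$; fix an embedding $\iota : K \hookrightarrow W^*$. For any $K$-linear functional $\phi : \bar M \to K$, the $R$-linear composite $\gamma := \iota \circ \phi \circ \pi : T^* \to W^*$ (with $\pi$ the canonical quotient map) extends by zero on $W^*$ to an endomorphism $\tilde\gamma$ of $V^*$. Since $b = f \oplus r \in Z$, the relation $b\tilde\gamma = \tilde\gamma b$ forces $r\gamma(v_0) = \gamma(f(v_0))$; because $\iota$ is $R$-linear and $r$ acts on $\iota(K)$ as multiplication by the rational function $r$, this reads $r\phi(\bar v) = \phi(\overline{f(v_0)})$ in $K$. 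As $\phi$ ranges over all $K$-linear functionals on the $K$-vector space $\bar M$ and these separate points, I conclude $r\bar v = \overline{f(v_0)}$ in $\bar M$.

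Writing $r = u/w$ with $u, w \in R$ coprime, the $q$-component of $r\bar v$ equals the class of $r$ in $R/(q^{n(q)})$ for every $q \in S_b$ not dividing $w$, which is a cofinite subset of $S_b$; meanwhile the $q$-component of $\overline{f(v_0)}$ is $f_q$. Equality in $\bar M$ thus forces $f_q = r$ in $R/(q^{n(q)})$ for all but finitely many $q \in S_b$, contradicting the defining property of $S_b$. The main obstacle is the structural step of recognizing $\bar M$ as a $K$-vector space and exploiting this to manufacture enough $R$-linear maps $T^* \to W^*$ to see all the relevant coordinates simultaneously, since none of the individual $T_q^*$ with $q \in S$ admits a nontrivial map into $W^*$.
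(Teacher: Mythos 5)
Your proposal is correct and takes essentially the same approach as the paper. The paper fixes a countable subset of $S_b$, applies an arbitrary $R$-module map $g_0\colon T^*\to K$ to the element $u\omega-vf\omega$ and uses injectivity of $K$ to conclude this element is torsion (i.e., has finite support), then argues via divisors of the annihilating polynomial; your argument packages the same mechanism through the torsion-free divisible quotient $\bar M$ and its $K$-linear functionals, reaching the identical conclusion that $u-wf_q$ vanishes modulo $q^{n(q)}$ for all but finitely many $q\in S_b$.
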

\begin{proof}If not, then there exists a sequence $S_1=\{q_0,q_1,\ldots\}\subseteq S_b$
such that $q_j\ne q_i$ if $j\ne i$. Consider the module
$$(\bigoplus_{j\in\bn} R/(q_j^{n_j}))^*\cong\prod_{j\in \bn}(R/(q_j^{n_j}))^*=:U,$$
where $n_j=n(q_j)$. Note that $U$ is a direct summand in $T^*$ since $T$ is the direct sum of modules $T_q$ and each $R/(q_j^{n_j})$
is a direct summand in $T_q$. Let $r=u/v$ with $u,v$ relatively prime polynomials. Let 
$\omega\in U$ be defined by $\omega=(\omega_j)_{j\in\bn}$, where $\omega_j\in (R/(q_j^{n_j}))^*$ is such that
$q_j^{n_j-1}\omega_j\ne0$, and let $\rho=(\rho_j)_{j\in \bn}$ be $\rho=v\omega$. For any $g_0\in {\rm L}(T^*,K)_R$ we have
that $rg_0(\rho)=g_0(f\rho)$ (since $(f\oplus r)\in Z$), hence $ug_0(\rho)=vg_0(f\rho)$, consequently $vg_0(u\omega)
=ug_0(v\omega)=ug_0(\rho)=vg_0(f\rho)=vg_0(fv\omega)$,
thus
$$g_0(u\omega-vf\omega)=0.$$
Since this holds for all $g_0$ and $K$ is injective, this implies that the element $u\omega-vf\omega$ is torsion,
say $h(u-vf)\omega=0$ for a polynomial $h\in R$.  
Since the components of this element are $h(u-vf_{q_j})\omega_j$, it follows from the definition of $\omega$ that 
$q_j^{n_j}$ 
divides $h(u-vf_{q_j})$. Since $h$ has only finitely many divisors, it follows that for all sufficiently large
$j$, $u-vf_{q_j}$ must be divisible by $q_j^{n_j}$, say $u-vf_{q_j}=s_jq_j^{n_j}$ for a polynomial $s_j$. Then
\begin{equation}\label{100}\frac{u}{v}=f_{q_j}+\frac{s_j}{v}q_j^{n_j}.\end{equation}
Since $v$ can be divisible only by finitely many prime factors $q_j$, $1/v$ acts as a multiplication by a polynomial on
$(R/(q_j^{n_j}))^*$ if $j$ is large enough. (If $1=m v + zq^{n_j}$ for some polynomials $m,z$, then $1/v$ acts as 
the multiplication by $m$.) Thus (\ref{100}) implies that $r$ and $f_{q_j}$ coincide as operators on  
$(R/(q_j^{n_j}))^*$ for large enough $j$ (since $q_j^{n_j}$ acts as $0$), hence also on $R/(q_j^{n_j})\cong (R/(q_j^{n_j}))^*$, but this contradicts the definition of $S_b$.
\end{proof}

Observe that the submodule $T_b:=\oplus_{q\in S_b}T_q$ of $V$ is {\em pure} in the following sense:  given 
$\eta\in T_b$ and $p\in R$, if there exists $\xi\in V$ such that $p\xi=\eta$, then there exists $\zeta\in T_b$
such that $p\zeta=\eta$. Since $S_b$ is finite, we can form  $q_b=\prod_{q\in S_b}q^{n_q}$
and clearly $q_bT_b=0$. It follows now from \cite[Theorem 7]{Ka} that $T_b$ is a direct summand in $V$,
say $V=T_b\oplus W_b$. Let $T_0=\oplus_{q\in Q_T\setminus S_b}T_q$, so that $T=T_b\oplus T_0$. Then 
$$T_b^*\oplus W_b^*=V^*=T^*\oplus W^*=T_b^*\oplus T_0^*\oplus W^*,$$
hence $W_b^*$ is naturally isomorphic to $T_0^*\oplus W^*$ (since they are both isomorphic to $V^*/T_b^*$). Since our element $b=f\oplus r$ acts on $T_0^*\oplus W^*$
as the multiplication by $r$ (by the definition of $S_b$), the same must hold for the action of $b$ on $W_b^*$ and consequently also on $W_b$.

The above discussion and lemmas prove the following theorem in the harder direction.

\begin{theorem}\label{th21}Let $T$ be the torsion submodule of a module $V$ over $R=\f[t]$, $W=V/T$ and $Z$ the center
of $\evv$. Denote by $P_W$ the  set of all primes $p\in R$ such that $pW\ne W$, by $Q_T$ the set of all primes 
$q\in R$ such that the $q$-primary part $T_q$ of $T$ is nonzero, and by $S$ the set of all $q\in Q_T\setminus P_W$ such 
that $T_q$ is a direct sum of cyclic modules of orders bounded by $n_q$ (that is, $q^{n_q}T_q=0$ for $n_q\in\bn$ and we 
choose the minimal such $n_q$). Suppose that $W\ne 0$. Then for each $b\in Z$ there exist a finite subset $S_b$ of 
$S$ and a submodule $W_b$ of $V$ such that 
$$V=T_b\oplus W_b,\ \ \mbox{where}\ \ T_b=\oplus_{q\in S_b}T_q,$$
b acts on $W_b$ as the multiplication by a rational function $r\in K$,
and $b$ acts on each summand $T_q$ of $T_b$ as the multiplication by a polynomial $f_q$. Conversely, any map $b$
on $V$ for which there exist such a decomposition of $V$ and $b$ is in center $Z$ of $\evv$ and in the center $Z_0$ of $\ev$.
\end{theorem}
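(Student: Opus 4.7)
The plan is to assemble the preceding lemmas for the forward direction and then handle the converse. Starting from $b\in Z$: since $W=V/T$ is torsion-free, $W^*$ is divisible and hence injective, giving $V^*\cong T^*\oplus W^*$ as $R$-modules. The projections onto the two summands lie in $\com{\tr}$, so $b$ commutes with them and decomposes as $b=f\oplus r$ with $f$ central in ${\rm L}(T^*)_R$ and $r$ central in ${\rm L}(W^*)_R$. Proposition \ref{pr11} identifies $f$ as a compatible family of multiplications by elements of $\hat{R}_q$ (or of $R/(q^{n(q)})$) along the primary decomposition $T=\oplus_{q\in Q_T}T_q$, and Proposition \ref{pr4} identifies $r$ with multiplication by a rational function $r=u/v\in K$ whose denominator $v$ is prime to every $p\in P_W$.

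Next I would exploit cross-summand maps to rigidify the $f_q$'s. Any $R$-module map $g:T_q^*\to K$ or $g:T_q^*\to R(p^{\infty})$ extends by zero to an endomorphism of $V^*$ which must commute with $b$, yielding the intertwining identity (\ref{90}). Combined with Lemma \ref{le16} and Kulikov's splitting $T_q=D_q\oplus B_q$ (divisible plus direct sum of cyclics), this forces $f_q=r$ whenever $T_q$ has a nonzero divisible submodule or quotient, or has a cyclic decomposition of unbounded order. For $q\in P_W$ with $T_q$ of bounded order the presence of $R(q^{\infty})$ in $W^*$ supplies enough maps to force $f_q=r$ modulo $q^{n(q)}$. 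The finiteness lemma preceding the theorem then gives that $S_b:=\{q\in S:\, f_q\ne r\text{ in }R/(q^{n(q)})\}$ is finite, so $T_b:=\oplus_{q\in S_b}T_q$ is a pure submodule annihilated by $q_b:=\prod_{q\in S_b}q^{n_q}$, and \cite[Theorem 7]{Ka} produces the splitting $V=T_b\oplus W_b$. The identification $W_b^*\cong T_0^*\oplus W^*$ (with $T_0=\oplus_{q\in Q_T\setminus S_b}T_q$) shows that $b$ acts on $W_b$ as multiplication by $r$, since on each factor it does so by the definition of $S_b$.

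For the converse, I would first verify that $T_b$ and $W_b$ are canonical submodules of $V$: $T_b=\ker q_b(a)$ since $q_b$ involves only primes in $S_b$, and $W_b=q_b(a)V$ since $q_b(a)$ is bijective on each $T_q$ with $q\notin S_b$ and on $W$, hence on $W_b$. Consequently every $c\in\com{(a)}$ preserves both summands, and $bc=cb$ reduces to commutation on each summand separately. On each $T_q\subseteq T_b$, $b$ is a polynomial in $a$ and is central in ${\rm L}_R(T_q)$ by Proposition \ref{pr11}; on $W_b$, $b$ is $r(a)$ with $r$ having denominator prime to every prime $p$ with $pW_b\ne W_b$, hence central in ${\rm L}_R(W_b)$ by Proposition \ref{pr4}. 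This gives $b\in Z_0$. The parallel dual argument, using $T_b^*\oplus W_b^*$ and the adjoint halves of the same two propositions, gives $b\in Z$.

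The main obstacle I anticipate is twofold. In the forward direction, the finiteness of $S_b$ is the most delicate step, since it requires the careful counting argument using a single ``test'' vector $\omega=(v\omega_j)_j$ and a partial-fraction analysis of $u/v$ to rule out infinitely many deviating primes. In the converse, the subtle point is to confirm the canonicity of $W_b$—the decomposition produced by \cite[Theorem 7]{Ka} is a priori non-unique, but the identity $W_b=q_b(a)V$ pins it down intrinsically so that the splitting is respected by every $c\in\com{(a)}$.
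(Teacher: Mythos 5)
Your forward direction follows the paper's own development, assembling the preceding lemmas in the intended order: the splitting $V^*\cong T^*\oplus W^*$, Propositions \ref{pr4} and \ref{pr11} to identify the components $r$ and $f$, Lemma \ref{le16} and the cross-summand analysis to pin down the exceptional set $S$, the finiteness of $S_b$, and Kaplansky's splitting for a pure bounded-torsion submodule. For the converse you take a genuinely different route. Where the paper shows directly that ${\rm L}(T_q^*,W_b^*)_R=0={\rm L}(W_b^*,T_q^*)_R$, forcing every endomorphism of $V^*$ (and of $V$) to be block diagonal relative to the given decomposition, you instead identify $T_b=\ker q_b(a)$ and $W_b=q_b(a)V$, an intrinsic description which makes $\com{(a)}$- and $\com{(a^*)}$-invariance of the splitting immediate and reduces centrality of $b$ to the separate blocks. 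Both arguments rest on the same core computation — multiplication by $q_b$ kills $T_b$ and $T_b^*$ while acting invertibly on $W_b$ and $W_b^*$ — but your version has a by-product the paper leaves implicit: the complement $W_b$ is in fact unique, since it must coincide with $q_b(a)V$ whatever complement the Kaplansky theorem happens to produce. One small economy: once block diagonality is established you do not need Propositions \ref{pr4} and \ref{pr11} for centrality on the blocks, since $b$ is a polynomial in $a$ on each $T_q$ and equal to $u(a)v(a)^{-1}$ on $W_b$ with $v(a)|W_b$ invertible, hence commutes with anything in $\com{(a)}$ restricted to each block.
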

\begin{proof} It only remains to prove the sufficiency of the stated conditions for $b$ to be in $Z$ and in $Z_0$. So, let
$b=(\oplus_{q\in S_b}f_q)\oplus r$ be the decomposition of $b$ with respect to the decomposition
\begin{equation}\label{102}V=(\oplus_{q\in S_b}T_q)\bigoplus W_b\end{equation}
of $V$, where $r\in K$ and $f_q\in R/(q^{n_q})$. Because of incompatible torsion there can be no nonzero module
homomorphisms between $T_{q_1}^*$ and $T_{q_2}^*$ (or between $T_{q_1}$ and $T_{q_2}$) for different $q_1,q_2$ in $S_b$. If we can show that there are no
nonzero homomorphisms from $T_q^*$ to $W_b^*$ and from $W_b^*$ to $T_q^*$ (consequently also no nonzero homomorphism
between $T_q$ and $W_b$), then each endomorphism of $V$ (and of $V^*$) will be represented
by a diagonal matrix relative to the decomposition (\ref{102}) (relative to the corresponding decomposition of $V^*$), hence clearly $b$ will be in $Z_0$ (and in $Z$).

Note that for $q\in S_b$ the multiplication by $q$ acts as an invertible operator on each primary summand $T_p$ of 
$T_0$ (hence also on the dual $T_0^*$ of the direct sum
of such summands) and also on each summand $R(p^{\infty})$ of $W^*$ (since $q\notin P_W$). Thus for $q\in S_b$ the 
multiplication by $q$ as invertible operator on $W_b^*=T_0^*\oplus W^*$
(hence also invertible on $W_b$).  Since $q^{n_q}T_q^*=0$, 
while the multiplication by $q$ acts as an invertible operator on $W_b^*$, we have that ${\rm L}(T_q^*,W_b^*)_R=0$
and also that ${\rm L}(W_b^*,T_q^*)_R=0$. (To prove the last identity, note that for each $\phi\in {\rm L}(W_b^*,T_q^*)_R$
we have $\phi(W_b^*)=\phi(q^{n_q}W_b^*)=q^{n_q}\phi(W_b^*)=0$.)
\end{proof}

\begin{co}\label{co111}The center $Z$ of $\evv$ can be regarded as a subset of the center $Z_0$ of $\ev$ (that is, each $z\in Z$ is
of the form $c^*$ for some $c\in Z_0$). Moreover, if $V$ is torsion or injective or if $V$ contains an isomorphic copy of $R$, then $Z=\{c^*:\ c\in Z_0\}$.
\end{co}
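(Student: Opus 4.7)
The plan is to separate the corollary's two assertions. For the inclusion $Z \subseteq \{c^*: c\in Z_0\}$, I would split on whether $V$ is torsion. If $V$ is torsion, Proposition \ref{pr11} already supplies the equality and nothing further is needed. Otherwise $W = V/T \ne 0$ and Theorem \ref{th21} applies: each $b\in Z$ determines a decomposition $V = T_b \oplus W_b$ where $T_b = \oplus_{q\in S_b}T_q$ is a finite direct sum of primary torsion summands on which $b$ acts through polynomials $f_q$, together with a rational function $r = u/v\in K$ by which $b$ acts on $W_b^*$. I would then define $c\in\lv$ to act as $f_q(a)$ on each $T_q\subseteq T_b$ and as $r(a)|_{W_b}$ on $W_b$; the converse statement of Theorem \ref{th21} delivers $c\in Z_0$, while $c^* = b$ follows by direct computation on each summand of $V^* = T_b^* \oplus W_b^*$.

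A small verification is needed to see that $r(a)$ is well-defined on $W_b$, i.e., that $v(a)|_{W_b}$ is invertible. Proposition \ref{pr4} applied to the torsion-free module $W$ gives that $v$ is coprime to every element of $P_W$, so $v(a)$ is bijective on $W = V/T$; for each $q\in Q_T\setminus S_b$, the equality $f_q = r$ established in the discussion before Theorem \ref{th21} (either in $\hat{R}_q$ or modulo $q^{n_q}$) forces $v$ to be coprime to $q$, so $v(a)$ is bijective on $T_0 := \oplus_{q\in Q_T\setminus S_b}T_q$. Since $T_0 \subseteq W_b$ and $W_b/T_0 \cong W$, a short exact sequence argument gives bijectivity of $v(a)$ on $W_b$.

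For the equality $Z = \{c^*: c\in Z_0\}$, the torsion case is again Proposition \ref{pr11}, and the injective case follows from Corollary \ref{co2} (which supplies the reverse inclusion $\{c^*: c\in Z_0\}\subseteq Z$) combined with the first part. For the remaining case $V = R \oplus V'$ (reading ``$V$ contains an isomorphic copy of $R$'' as a direct summand, in line with the abstract), I would show $Z_0 = \f[a]$, from which $\{c^*: c\in Z_0\} = \f[a^*] \subseteq \bcom{(a^*)} = Z$ is immediate. The argument: for $c\in Z_0$, write $\xi_0 = 1\in R$ and $c\xi_0 = r_0\xi_0 + v_0$ with $r_0\in R$, $v_0\in V'$; for each $\eta\in V$ the map $\phi_\eta: V\to V$ sending $s\xi_0 + v$ (in $R\oplus V'$) to $s\eta$ is $R$-linear, hence lies in $\com{(a)}$, and the commutation $c\phi_\eta(\xi_0) = \phi_\eta c(\xi_0)$ reduces to $c\eta = r_0\eta$ for every $\eta$, giving $c = r_0(a)$.

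The main conceptual subtlety is the correct interpretation of the third hypothesis: Example \ref{ex11} shows the conclusion fails when $R$ is only a submodule rather than a direct summand, which justifies the reading above. Once that is fixed, the remainder is essentially bookkeeping with the structural results from Sections 2 and 3.
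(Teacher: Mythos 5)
Your proof is correct and follows essentially the same route as the paper's: Theorem~\ref{th21} for the inclusion $Z\subseteq\{c^*:c\in Z_0\}$, Proposition~\ref{pr11} and Corollary~\ref{co2} for the torsion and injective cases, and the fact that $Z_0=\f[a]$ when $R$ is a direct summand. The two points you add beyond the paper's terse proof --- the explicit check that $v(a)|_{W_b}$ is invertible (which the paper's ``consequently also on $W_b$'' in Theorem~\ref{th21} leaves implicit), and the elementary derivation of $Z_0=\f[a]$ via the maps $\phi_\eta$ instead of citing \cite[Exercise 95]{Ka} --- are both sound and worth having spelled out; you are also right that the third hypothesis must be read as ``direct summand,'' as the abstract and the proof make clear and as Example~\ref{ex11} forces.
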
 

\begin{proof}The first sentence follows directly from Theorem \ref{th21}. If $V$ is torsion or injective it follows now from Proposition \ref{pr11}
or Corollary \ref{co2} that the map $c\mapsto c^*$ is an isomorphism of $Z_0$ onto $Z$. Finally, if $V$ contains a copy of $R$ as a direct summand
it is well-known (and elementary to prove, \cite[Exercise 95]{Ka}) that $Z_0=R$, hence by Theorem \ref{th21} $Z$ and $Z_0$ must both essentially coincide
with $R$.
\end{proof}

{\bf Problem.} For which subalgebras $A\subseteq\lv$ is the center of ${\rm L}(V^*)_A$ contained in the set
$\{x^*:\, x\in Z_0\}$, where $Z_0$ is the center of ${\rm L}_A(V)$?

\section{The range inclusion for  derivations}

Throughout this section (except in the Example \ref{ex12})  we assume that $V$ is a module over $R=\f[t]$ such that the map $x\mapsto x^*$ is an isomorphism
from the center $Z_0$ of $\ev$ onto the center $Z$ of $\evv$. In other words, we assume that $a\in\lv$ is such that

\begin{equation}\label{A}
\bcom{(a^*)}=
\{c^*:\, c\in\bcom{(a)}\}.
\end{equation}

Let $\fv$ be the vector space of all finite rank linear operators on a vector space $V$. 
$\fv$ is naturally isomorphic to $V\otimes V^*$ by the isomorphism which sends $\xi\otimes\rho$ ($\xi\in V$,
$\rho\in V^*$) to the rank 1 operator $\eta\mapsto \rho(\eta)\xi$. Thus the dual space of $\fv$ can be identified
with $(V\otimes V^*)^*=\lvv$.

\begin{theorem}\label{th3}Let $d_a$ and $d_b$ be the  derivations on $\lv$ induced by operators
$a,b\in\lv$. Assume that $a$ has property (\ref{A}). Then $\ker{d_a}\subseteq\ker{d_b}$ if and only if $d_b(\fv)\subseteq d_a(\fv)$. In this case the
inclusion $\im{d_{b^*}}\subseteq\im{d_{a^*}}$ also holds.
\end{theorem}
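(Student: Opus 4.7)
The plan is to reduce the entire statement to hypothesis (\ref{A}) by identifying the adjoint of the restricted derivation $d_a|_\fv$.

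First, I would set up the duality. Using the identifications $\fv \cong V\otimes V^*$ and $\fv^{*} \cong (V\otimes V^*)^{*} \cong \lvv$ given by the pairing $\inner{\xi\otimes\rho}{T} = (T\rho)(\xi)$, a direct computation on elementary tensors with
\[d_a(\xi\otimes\rho) = (a\xi)\otimes\rho - \xi\otimes(a^*\rho)\]
shows that the adjoint of $d_a|_\fv : \fv\to\fv$ is precisely the derivation $d_{a^*}$ acting on all of $\lvv$, and likewise for $b$.

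Next, I would invoke the elementary vector-space identities $\im{L^{*}} = (\ker L)^\perp$ and $\ker L^{*} = (\im{L})^\perp$, together with $W = W^{\perp\perp}$ for any subspace $W$. These translate the inclusion $d_b(\fv)\subseteq d_a(\fv)$, after annihilating in $\lvv$, into $\ker d_{a^*}\subseteq \ker d_{b^*}$, i.e., $b^*\in\bcom{(a^*)}$. On the other hand, $\ker d_a\subseteq\ker d_b$ is literally $\com{(a)}\subseteq\com{(b)}$, i.e., $b\in\bcom{(a)}$. Hypothesis (\ref{A}), combined with the injectivity of $c\mapsto c^*$, then gives $b\in\bcom{(a)}$ iff $b^*\in\bcom{(a^*)}$, closing the main equivalence.

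Finally, the supplementary statement $\im{d_{b^*}}\subseteq \im{d_{a^*}}$ falls out of the same duality: one has $\im{d_{a^*}} = (\ker d_a|_\fv)^\perp$ and $\im{d_{b^*}} = (\ker d_b|_\fv)^\perp$, and the restricted inclusion $\ker d_a\cap\fv \subseteq \ker d_b\cap\fv$ reverses under annihilation. There is no serious obstacle in the argument; all the deep work of the theorem is packaged into hypothesis (\ref{A}) and Corollary \ref{co111}. The only point requiring mild care is the verification that the adjoint of $d_a|_\fv$ on $\fv^*=\lvv$ is indeed the full derivation $d_{a^*}$ (and not merely something defined on finite-rank operators), which is a routine calculation on rank-one tensors.
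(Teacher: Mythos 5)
Your proof is correct and follows essentially the same route as the paper's: both identify $(d_a|_{\fv})^*$ with $\pm d_{a^*}$ under the identification $\fv^* = \lvv$, translate the finite-rank range inclusion into $\ker d_{a^*} \subseteq \ker d_{b^*}$ (i.e.\ $b^* \in \bcom{(a^*)}$), and close the loop with hypothesis (\ref{A}); the supplementary range inclusion follows from the same annihilator identities. The only difference is a cosmetic sign (the paper writes $d_{a^*} = -(d_a)^*$, your computation gives $+$, a matter of pairing convention that does not affect kernels or images), and that you spell out the tensor calculation the paper leaves as "easy to verify."
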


\begin{proof}Note that $\ker{d_a}$ is just the commutant $\com{(a)}$ of $a$ and that the inclusion
$\com{(a)}\subseteq\com{(b)}$ is equivalent to $b\in\bcom{(a)}$. (Indeed, by taking the commutants we infer
from $\com{(a)}\subseteq\com{(b)}$ that $b\in\bcom{(a)}$. Conversely, $b\in\bcom{(a)}$
implies that $(a)^{\prime\prime\prime}\subseteq\com{(b)}$; but $A^{\prime\prime\prime}=\com{A}$ for any subalgebra
$A$ of $\lv$, as it is easy to deduce  from the obvious inclusion $A\subseteq\bcom{A}$.) Thus the conditions $\ker{d_a}\subseteq\ker{d_b}$ and $b\in\bcom{(a)}$ are equivalent.
By the same argument the conditions $\ker{d_{a^*}}\subseteq\ker{d_{b^*}}$ and
$b^*\in\bcom{(a^*)}$ are also equivalent. But by assumption $b\in\bcom{(a)}$ if and only if $b^*\in\bcom{(a^*)}$,
hence it follows that the two conditions $\ker{d_a}\subseteq\ker{d_b}$ and $\ker{d_{a^*}}\subseteq\ker{d_{b^*}}$ are
equivalent. It is easy to verify that $d_{a^*}=-(d_a)^*$, hence $\ker{d_{a^*}}=(d_a(\fv))^{\perp}$ and similarly for
$b$, so the last inclusion is equivalent to $d_b(\fv)\subseteq d_a(\fv)$.

If $\ker{d_a}\subseteq\ker{d_b}$, then 
\begin{align*}\im{d_{b^*}}=\im{((d_b|\fv)^*)}=(\ker{d_b|\fv})^{\perp}\subseteq\\
(\ker{d_a|\fv})^{\perp}=\im{((d_{a}|\fv)^*)}=\im{d_{a^*}}.\end{align*}
\end{proof}

Is there any connection between the two range inclusions
\begin{equation}\label{31}d_b(\fv)\subseteq d_a(\fv)\end{equation}
and
\begin{equation}\label{32}d_b(\lv)\subseteq d_a(\lv)?\end{equation}
The following example show that (\ref{32}) does not imply (\ref{31}).

\begin{ex}\label{ex10}Let $V=\f^{(\bn)}$ and $a\in\lv$ be  as in Example \ref{ex1}, so that
any $b\in\bcom{(a)}$ is a polynomial in $a$. Since $V$ is isomorphic to $R=\f[t]$ as an $R$-module, $a$ has 
property (\ref{A}).
We may represent operators in $\lv$
by $\bn\times\bn$ matrices (that have in each column only finitely many nonzero elements). A short computation shows 
that $d_a$ is surjective on $\lv$. Thus the inclusion 
(\ref{32}) holds for all $b\in\lv$, not just
for $b\in\bcom{(a)}$. (In this respect $a$  behaves quite differently from the shift operator
on the Hilbert space $\ell_2$; for the latter see \cite{W}.) 
\end{ex}

In the rest of the paper we will prove that (\ref{31}) implies (\ref{32}), which is (by the proof of Theorem
\ref{th3}) equivalent to the following theorem.

\begin{theorem}\label{th4}Let $a,b\in\lv$ and suppose that $a$ has property (\ref{A}). If $b\in\bcom{(a)}$, then the range inclusion $d_b(\lv)\subseteq d_a(\lv)$ 
holds.
\end{theorem}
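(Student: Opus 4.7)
\emph{Plan.} The plan is to combine the structural descriptions of $\bcom{(a)}$ from Section~3 with explicit blockwise constructions of $y$ satisfying $d_a(y) = d_b(x)$.

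By Theorem~\ref{th21} (in the mixed case $W := V/T \ne 0$) or Proposition~\ref{pr11} (in the torsion case), $b$ preserves a decomposition $V = V_1 \oplus V_2$ (respectively $V = \bigoplus_p V_p$) into $a$-invariant summands, on each of which $b$ has an explicit form. In the mixed case take $V_1 := T_b = \bigoplus_{q\in S_b} T_q$ and $V_2 := W_b$; by the Chinese remainder theorem there is a polynomial $F$ with $b|_{V_1} = F(a|_{V_1})$, while $b|_{V_2} = u(a|_{V_2}) v(a|_{V_2})^{-1}$ for coprime $u, v \in R$ with $v(a)$ invertible on $V_2$, and $P(t) := \prod_{q\in S_b} q^{n_q}$ annihilates $V_1$ while acting invertibly on $V_2$. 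In the pure torsion case, $b_p := b|_{V_p} = f_p(a|_{V_p})$ for some $f_p \in \hat R_p$ (or $f_p \in R/(p^{n_p})$ if $V_p$ is bounded).

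Given $x \in \lv$, write $x = (x_{ij})$ and the sought $y = (y_{ij})$ in block form with respect to the chosen decomposition, and denote by $L_c$, $R_c$ left and right multiplication by $c$ on the appropriate ${\rm L}$-space. Since $a$ and $b$ are block-diagonal, $d_a(y) = d_b(x)$ decouples into the independent equations $(L_{a_i} - R_{a_i})(y_{ii}) = (L_{b_i} - R_{b_i})(x_{ii})$ on the diagonal and $(L_{a_i} - R_{a_j})(y_{ij}) = (L_{b_i} - R_{b_j})(x_{ij})$ off-diagonal. The basic algebraic tool throughout is the identity $p(t) - p(s) = (t-s)\tilde p(t,s)$ in $R[t,s]$ for any polynomial $p$, where $\tilde p(t,s) := (p(t) - p(s))/(t-s)$. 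For the diagonal blocks: when $b_i = F(a_i)$ is polynomial take $y_{ii} := \tilde F(L_{a_i}, R_{a_i})(x_{ii})$; when $b_i = u(a_i)v(a_i)^{-1}$ take $y_{ii} := v(a_i)^{-1} Q(L_{a_i}, R_{a_i})(x_{ii}) v(a_i)^{-1}$ with $Q(t,s) := (u(t)v(s) - v(t)u(s))/(t-s)$, using that $v(a_i)^{-1}$ commutes with $a_i$; and in the unbounded $p$-torsion case with $b_i = f(a_i)$, $f \in \hat R_p$, use the same formal formula $y_{ii} := \tilde f(L_{a_i}, R_{a_i})(x_{ii})$ and check that its evaluation at any $v \in V_p$ is a finite sum, since $v$ and each iterate $x(p(a)^j v)$ have finite $p$-order. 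For the off-diagonal blocks in the mixed case, $P(L_{a_1}) = 0$ and $P(R_{a_2})$ invertible on ${\rm L}(W_b, T_b)$, together with the identity $P(L_{a_1}) - P(R_{a_2}) = (L_{a_1} - R_{a_2}) Q_P(L_{a_1}, R_{a_2})$, exhibit $-Q_P(L_{a_1}, R_{a_2}) P(R_{a_2})^{-1}$ as the two-sided inverse of $L_{a_1} - R_{a_2}$; applying it to $b_1 x_{12} - x_{12} b_2$ produces $y_{12}$, and $y_{21}$ is handled symmetrically. In the pure torsion case with distinct primes $p \ne q$, the same inversion applies on each cyclic submodule $Rv \subseteq V_q$ (annihilated by $q^{m(v)}$, with $q^{m(v)}(a_p)$ invertible on $V_p$), and uniqueness of these local solutions patches them into a global $\f$-linear $y_{pq}$.

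The main obstacle is to ensure these constructions yield genuine operators on $V$ in the unbounded torsion cases: pointwise finiteness of $\tilde f(L_a, R_a)(x)$ for the diagonal series and local-to-global patching of the Rosenblum inverses off-diagonal are the key verifications. The fact that $x(v) \in V = \bigoplus_p V_p$ has only finitely many nonzero $V_p$-components in the torsion case guarantees that the assembled $y(v)$ is a well-defined element of $V$.
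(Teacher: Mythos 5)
Your proposal is correct and follows essentially the same route as the paper: the same block decomposition of $V$ supplied by Proposition~\ref{pr11} / Theorem~\ref{th21}, the same divided-difference operators for the diagonal blocks (your $\tilde p(L_a,R_a)$ is precisely the paper's $\dot p_a$, extended to rational functions exactly as in~(\ref{120}) and to $\hat R_p$-series by local truncation), and the same Rosenblum-type inversion (the paper's Lemma~\ref{le2}, including its local-to-global second part) for the off-diagonal blocks. Your use of the Chinese remainder theorem to merge the finitely many $T_q$'s into a single block, and your explicit remark that finiteness of the nonzero $V_p$-components of each $x(v)$ makes the assembled $y$ well defined, are only presentational variants of the paper's argument.
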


We will divide the proof of this theorem into several cases.
\subsection{The case when $\bcom{(a)}$ is isomorphic to a subalgebra of $\f[t]$}
First we will consider the simple case when the center
of $\ev$ is just $R=\f[t]$ (hence any $b\in\bcom{(a)}$ is a polynomial in $a$). 
For this and a later use, it will be convenient to have the following definition. 

\begin{de}For a polynomial $p(t)=\sum_{j=0}^n\alpha_jt^j$  the derivative of the map
$a\mapsto p(a)$ at a point $a\in\lv$ is the linear map $\dot{p}_a:\lv\to\lv$
defined by
$$\dot{p}_a(x)=\sum_{j=1}^n\alpha_j\sum_{i=0}^{j-1}a^{j-i-1}xa^i\ \ \ (x\in\lv).$$
\end{de}

Clearly $\dot{p}_a$ is an $\com{(a)}$-bimodule endomorphism of $\lv$ and a simple computation shows that
\begin{equation}\label{33}d_a(\dot{p}_a(x))=d_{p(a)}(x)\end{equation}
for all $x\in\lv$. Thus $\im{d_{p(a)}}\subseteq\im{d_a}$. Moreover, the following form of Leibnitz rule can also be easily 
verified:
\begin{equation}\label{34}(pq)\dot{}_a(x)=\dot{p}_a(x)q(a)+p(a)\dot{q}_a(x)\ \ (x\in\lv,\ p,q\in R).\end{equation}
This suggest us to define $\dot{r}_a(x)$ for any rational function
$r=p/q$, such that $q(a)$ is invertible,  to be the unique operator satisfying the following two equivalent identities:
\begin{equation}\label{120}q(a)\dot{r}_a(x)+\dot{q}_a(x)r(a)=\dot{p}_a(x)=\dot{r}_a(x)q(a)+r(a)\dot{q}_a(x);\end{equation}
that is
$$\dot{r}_a(x)=q(a)^{-1}\dot{p}_a(x)-q(a)^{-1}\dot{q}_a(x)r(a)\ \ (x\in\lv).$$ 
(That so defined $\dot{r}_a(x)$ satisfies also the second equality in (\ref{120}) can be proved by a simple computation, 
using the identity $\dot{p}_a(x)q(a)+p(a)\dot{q}_a(x)=(pq)^{\cdot}_a(x)=(qp)^{\cdot}_a(x)=\dot{q}_a(x)p(a)+q(a)\dot{p}_a(x)$.)
Then it can be proved that (\ref{33}) and (\ref{34}) hold for rational functions. (To prove that 
$(rs)\dot{}_a(x)=\dot{r}_a(x)s(a)+r(a)\dot{s}_a(x)$ for two rational functions $r$ and $s$,
one shows that $\dot{r}_a(x)s(a)+r(a)\dot{s}_a(x)$ satisfies one of the two defining identities (\ref{120}) for the product $rs$
in place of $r$.)
Then (\ref{33}) implies that the range inclusion $\im{d_b}\subseteq\im{d_a}$ holds whenever the center of $\ev$ is 
a subalgebra of $K=\f(t)$, in particular if $V$ is torsion-free, by Proposition \ref{pr4} and the assumption (\ref{A}). 

\subsection{The case of locally algebraic operators} We study next the case of torsion modules, that is, we assume that $a\in \lv$ is locally algebraic. If $a$ is algebraic, 
each $b\in\bcom{(a)}$ is of the form $b=f(a)$ for a 
polynomial $f$, and $d_b=d_a\dot{f}_a$ by (\ref{33}), hence $\im{d_b}\subseteq\im{d_a}$. 
Assume now that $a$ is not algebraic but that  $V$ is  $p$-primary for some prime $p\in R$, that is,
$V=\cup_{m=1}^{\infty}U_m$, where $U_m=\ker{p(a)^m}$. Then $b=f(a)$ for a power 
series of the form (\ref{50}) (see Subsection 3.3).  For such a series $f$ we would like to define 
$\dot{f}_a:\lv\to\lv$. 

\begin{de}Given $\xi\in V$, choose $m$ so that $\xi\in U_m$, and then choose $k$ such that
all the vectors $\dot{p}_a(x)p(a)^j\xi$ ($j=0,\ldots m$) are in $U_k$ and set $n=k+m$.
Choose a polynomial $F_n\in R$ such that the coset of $F_n$ in $R/(p^n)$ is equal to the image of $f$ under the
natural map $\hat{R}_p\to R/(p^n)$ (for example $F_n=\sum_{j=0}^{n-1}f_jp^j$, where $f_j$ are as in (\ref{50})) and set
\begin{equation}\label{35}\dot{f}_a(x)\xi=(F_n)\dot{}_a(x)\xi.\end{equation}
\end{de}

We have to show that this definition is independent of the choices of $m$, $k$ and $F_n$. 
If $G_n$ is another such polynomial, then $G_n-F_n=qp^n$ for a polynomial $q\in R$. Using the product
formula (\ref{34}) and induction we have (since $p(a)^n\xi=0$) that
\begin{align*}(qp^n)\dot{}_a(x)\xi=\dot{q}_a(x)p^n(a)\xi+q(a)(p^n)\dot{}_a(x)\xi
=q(a)\sum_{j=0}^{n-1}p(a)^{n-j-1}\dot{p}_a(x)p(a)^{j}\xi=0,\end{align*}
since $n-j-1\geq k$ for all $j=0,\ldots,m-1$. Thus $(G_n)\dot{}_a(x)\xi=(F_n)\dot{}_a(x)\xi$.
Enlarging $m$ and $k$ would give us a polynomial congruent to $F_n$ module $p^n$. This shows that $\dot{f}_a$ is well 
defined. Then $\dot{f}_a$ is an $\com{(a)}$-bimodule
map on $\lv$ such that $d_a\dot{f}_a=d_{f(a)}=d_b$ (since this holds locally by the polynomial case considered above), hence $\im{d_b}\subseteq\im{d_a}$. 

In the general case of several primary summands, relative to the decomposition $V=\oplus_{p\in P} V_p$ each operator
$x\in\lv$ is represented by an infinite operator matrix $[x_{p,q}]$, where $x_{p,q}$ is an operator from $V_q$ to
$V_p$. Denoting by $a_p$ and $b_p$ the restrictions of $a$ and $b$ to $V_p$, the proof that $\im{d_b}\subseteq\im{d_a}$ 
reduces to showing that the equation 
\begin{equation}\label{40}a_px-xa_q=y\end{equation}
has a solution $x$ in ${\rm L}(V_q,V_p)$ for each $y\in{\rm L}(V_q,V_p)$ of the form $b_pz-zb_q$. If $q=p$,
this has just been proved in the previous paragraph, while if $q\ne p$ it is a consequence of Lemma \ref{le2} below.
We remark that the equation of the form $cx-xd=y$ has been studied in the analytic context 
using the notion of spectrum (see e.g. \cite{Ros} or \cite[p. 8]{RR}), but this method does not apply to the purely 
algebraic context of Lemma \ref{le2}.

\begin{lemma}\label{le2} Let $c$, $e$ be linear operators on vector spaces $V_c$ and $V_e$. If there exists
a polynomial $v$ such that $v(e)=0$ and $v(c)$ is invertible, then  for each $y\in{\rm L}(V_e,V_c)$ the equation
\begin{equation}\label{41}cx-xe=y\end{equation}
has a unique solution $x\in{\rm L}(V_e,V_c)$. 

The same conclusion holds under the assumption that  one of the spaces $V_c$, $V_e$, say $V_e$, is of the form
$V_e=\cup_{n=1}^{\infty}\ker{q(e)^n}$ for a polynomial $q$ such that $q(c)$ is invertible.
\end{lemma}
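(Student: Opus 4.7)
The plan is to encode the equation $cx-xe=y$ as the action of a single linear operator on the space $M:={\rm L}(V_e,V_c)$ and to show it is invertible. Define the commuting pair of linear maps $L,R:M\to M$ by $L(x)=cx$ and $R(x)=xe$; the equation becomes $(L-R)(x)=y$. Since $L$ and $R$ commute, for any polynomial $u\in R$ we have $u(L)(x)=u(c)x$ and $u(R)(x)=xu(e)$, and more generally one can apply any polynomial in two commuting variables to the pair $(L,R)$.

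For the first statement, exploit the polynomial division identity
\[
v(s)-v(r)=(s-r)\,w(s,r)
\]
in $\f[s,r]$, where $w$ is the obvious polynomial in two commuting indeterminates. Substituting $(s,r)=(L,R)$ gives
\[
v(L)-v(R)=(L-R)\,w(L,R)=w(L,R)\,(L-R),
\]
where the second equality uses $[L,R]=0$. Now $v(L)$ is the operator $x\mapsto v(c)x$, which is invertible with inverse $x\mapsto v(c)^{-1}x$, while $v(R)$ is the operator $x\mapsto xv(e)=0$. Hence $(L-R)$ has a common two-sided inverse $w(L,R)\circ(v(L))^{-1}$, so $L-R$ is invertible on $M$, giving existence and uniqueness of $x$.

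For the second statement, reduce to the first by localization on $V_e$. For every $n\in\bn$ the subspace $V_e^{(n)}:=\ker q(e)^n$ is $e$-invariant, and the restriction $e_n:=e|V_e^{(n)}$ is annihilated by $v_n(t):=q(t)^n$, while $v_n(c)=q(c)^n$ is invertible by hypothesis. The first part applied to the pair $(c,e_n)$ and to $y_n:=y|V_e^{(n)}$ yields a unique operator $x_n\in{\rm L}(V_e^{(n)},V_c)$ with $cx_n-x_ne_n=y_n$. By the uniqueness applied on $V_e^{(n)}\subseteq V_e^{(n+1)}$, the maps $x_n$ are compatible, so they patch to an operator $x\in{\rm L}(V_e,V_c)$ (using $V_e=\bigcup_n V_e^{(n)}$) which satisfies $cx-xe=y$; and uniqueness of $x$ follows from the same pointwise argument applied to the difference of two solutions.

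The only delicate point is the identity $v(L)-v(R)=(L-R)w(L,R)$, which is just a formal polynomial identity in two commuting variables, so there is no real obstacle here; the rest is bookkeeping.
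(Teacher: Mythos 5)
Your proof is correct and takes essentially the same route as the paper. The paper's key computation, $f(c)x-xf(e)=\dot f_{c,e}(y)$ with $y=cx-xe$, is exactly your identity $v(L)-v(R)=w(L,R)(L-R)$ on $M={\rm L}(V_e,V_c)$ (so $\dot v_{c,e}(y)=w(L,R)(y)$ and the paper's solution $x=v(c)^{-1}\dot v_{c,e}(y)$ is your $(v(L))^{-1}w(L,R)(y)$); for the second part the paper writes the local formula $x\xi=q(c)^{-n}(q^n)\dot{}_{c,e}(y)\xi$ and checks well-definedness directly, whereas you patch the restrictions $x_n$ via the uniqueness from part one, which is a clean equivalent.
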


\begin{proof}
Observe that
for any polynomial $f=\sum\alpha_jt^j$, $x\in{\rm L}(V_e,V_c)$ and $y:=cx-xe$ the following identity holds: 
\begin{equation}\label{42}f(c)x-xf(e)=\sum_j\alpha_j\sum_{i=0}^{j-1}c^{j-i-1}ye^{i}=:\dot{f}_{c,e}(y)\end{equation}
Applying this to $f=v$, if $v(e)=0$ and $v(c)$ is invertible, we deduce that
\begin{equation}\label{43}x=v(c)^{-1}\dot{v}_{c,e}(y).\end{equation}
Conversely, a direct computation (using the definition (\ref{42}) of $\dot{f}_{c,e}(y)$) verifies that $x$ given by (\ref{43}) is
indeed a solution of (\ref{41}). 

In a more general situation of the second part of the lemma, we try to solve
the equation (\ref{41}) locally, that is, for each $\xi\in V_e$ we choose $n$ so that $\xi\in\ker{q(e)^n}$ and then, 
as suggested by (\ref{43}) we set 
$$x\xi:=q(c)^{-n}(q^n)\dot{}_{c,e}(y)\xi,$$
To show that $x\xi$ is independent
of the choice of $n$, note (by a straightforward computation) for any two polynomials $u,v$ the equality
$$(uv)\dot{}_{c,e}(y)=\dot{u}_{c,e}(y)v(e)+u(c)\dot{v}_{c,e}(y).$$
Applying this to $u=q$ and $v=q^n$, we obtain (since $q(e)^n\xi=0$) that
$$q(c)^{-n-1}(q^{n+1})\dot{}_{c,e}(y)\xi=q(c)^{-n}(q^{n})\dot{}_{c,e}(y)\xi.$$
So $x$ is a well-defined map and it follows that $x$
is linear and satisfies the equation (\ref{41}).
\end{proof}

\subsection{The general non-torsion case}For a general non-torsion module $V$ we know from Theorem \ref{th21} and our assumption
(\ref{A}) that for 
every $b$ in $\bcom{(a)}$ ($=Z_0$) $V$ decomposes into a direct sum $V=(\oplus_{q\in S_b}T_q)\oplus W_b$ so that $b$ acts on $W_b$ as the 
multiplication by a rational function $r$ (that is, $b|W_b=r(a|W_b)$) and $b$ acts on each $T_q$ as the multiplication by a 
polynomial $f_q$. Moreover, $q(a)|T_p$ is invertible if $p,q\in S_b$ are different (since $p$ and $q$ are different
primes in $R$). Also $q(a)|W_b$ is invertible for $q\in S_b$, as we have seen in the proof of Theorem \ref{th21}. 
If we now represent operators in $\lv$ by matrices relative to this decomposition of
$V$, we may again use Lemma \ref{le2}, in the same way as above, to show that the
equation $ax-xa=y$ has a solution for each $y\in\im{d_b}$. 

\medskip

Now we show by an example that the conclusion of Theorem \ref{th4} is not true if we drop the assumption (\ref{A}).

\begin{ex}\label{ex12} Let $R=\f[t]$, $U$, $W$ and $V=U\oplus W$  be as in Example \ref{ex11}, so that ${\rm L}_R(U,W)=0$
and ${\rm L}_R(W,U)=0$. Let $a_1\in{\rm L}_R(U)$ and $a_2\in{\rm L}_R(W)$ be the multiplications by $t$ and set $a:=a_1\oplus a_2$,
$b:=a_1\oplus0$. Then $a,b\in\ev$ and $b\in\bcom{(a_1)}\oplus\bcom{(a_2)}=\bcom{(a)}$. Nevertheless  the range of $d_b$ is
not contained in the range of $d_a$. To show this, represent operators on $V$ as $2\times 2$ matrices relative to
the decomposition $V=U\oplus W$ and consider the entries in the position $(1,2)$: if $d_b(\lv)\subseteq d_a(\lv)$, a
simple computation shows that for each $x\in{\rm L}(W,U)$ there exists an $y\in {\rm L}(W,U)$ such that $a_1y=a_1x-xa_2$.
But then $a_1(x-y)=xa_2$, which implies that the range of $xa_2$ is contained in the range of $a_1$. This can hold
for all $x\in{\rm L}(W,U)$ only if $a_1=0$, a contradiction.
\end{ex}

\section{A counterexample among bounded operators}
An example of a Banach space $X$  and  operators $a,b\in{\rm B}(X)$ (the algebra of all bounded operators on $X$) will be 
presented
such that $b$ is in the bicommutant $\bcom{(a)}$ of $a$ in ${\rm B}(X)$, but nevertheless its adjoint $\du{b}$ is not 
in the bicommutant of $\du{a}$ in ${\rm B}(\du{X})$. Here we use the notation $\du{X}$ for the Banach space dual of $X$,
to distinguish it from the linear space dual $X^*$. Similarly, $\du{a}$ denotes the bounded adjoint operator of $a$ acting on $\du{X}$.

\begin{ex}Let $H$ be a separable infinite dimensional Hilbert space, $B$ the algebra of all bounded linear
operators on $H$, $K$ the ideal in $B$ of all compact operators, $C=B/K$ the Calkin algebra and $T$ the
ideal in $B$ of all trace class operators. It is well-known (see e.g. \cite[Section II.1]{T} or \cite{KR}) that $\du{K}=T$ and $\du{B}=T\oplus K^{\perp}$, 
where $K^{\perp}$ is the annihilator of $K$ in 
$\du{B}$, and $K^{\perp}=\du{C}$. 

Let $a_0\in K$ be an injective operator represented in some orthonormal basis of $H$ by
a diagonal matrix with the diagonal entries $\alpha_n$, with $\alpha_n\ne\alpha_m$ if $n\ne m$.
Denote by $D$ the algebra of all operators in $B$ that can be represented by diagonal matrices with respect to the
same orthonormal basis as $a_0$. Thus $D$ is the commutant and the bicommutant of $a_0$ in $B$. Finally, let
$a$ be the left multiplication by $a_0$ on $B$ (that is, $a(x)=a_0x$ for all $x\in B$). Note that 
$\du{a}$ is the right multiplication by $a_0$ on $\du{B}$ (where $\rho a_0$ is defined by $\inner{\rho a_0}{x}=
\inner{\rho}{a_0x}$, for all $\rho\in\du{B}$ and $x\in B$),  $T$ is invariant under $\du{a}$, and $\du{a}(\du{C})=0$
since $a_0$ is compact. Thus $\du{a}$ on $\du{B}=T\oplus\du{C}$ decomposes into the direct sum $\du{a}=(\du{a}|T)\oplus0$,
hence $\com{(\du{a})}$ contains all maps on $T\oplus\du{C}$ of the form $0\oplus h$, where $h$ is the right multiplication
on $\du{C}$ by any element $\dot{c}\in C$. We can choose $\dot{c}\in C$ and $d_0\in D$ so that $\dot{c}\dot{d}_0\ne
\dot{d}_0\dot{c}$, where $\dot{d}_0$ denotes the coset of $d_0$ in the Calkin algebra $C$. (Indeed, since $D$ is equal to
its own commutant in $B$, the same holds for the image of $D$ in $C$ by \cite{JP}.) Let $d:B\to B$ be the left 
multiplication by $d_0$. We will show that $d\in\bcom{(a)}$ and that nevertheless $\du{d}\notin\bcom{(\du{a})}$.

Let $f\in\com{(a)}$ (where the commutant is taken in the algebra of all bounded operators on $B$) and let $f=f_n+f_s$ be the decomposition of $f$ into the normal part $f_n$ and
the singular part $f_s$ (this means that $f_n$ is weak* continuous and $f_s(K)=0$, see \cite[Chapter 10]{KR}).
Then $f_n$ and $f_s$ are both in $\com{(a)}$. (Indeed, the equality $fa=af$ can be written as $f_na-af_n=af_s-f_sa$,
where the left side is normal and the right side is singular, so they are both $0$.) So $f_sa=af_s$, which means that
$$f_s(a_0x)=a_0f_s(x)\ \ \mbox{for all}\ x\in B.$$
Since $a_0\in K$, $f_s(a_0x)=0$, hence also $a_0f_s(x)=0$ for all $x\in B$. Since $a_0$ is injective, we deduce that
$f_s=0$, hence $f=f_n$ is normal. Let $(a_n)$ be a sequence in the algebra generated by $a_0$, converging to $d_0$ in the
weak* topology. Since $f\in\com{(a)}$, $f$ commutes with the left multiplications by all $a_n$. Then the weak* continuity of 
$f$ implies that 
$$f(d_0x)=\lim_nf(a_nx)=\lim_na_nf(x)=d_0f(x)\ \ \mbox{for all}\ x\in B.$$
Hence $d$ commutes with $f$ and therefore $d\in\bcom{(a)}$.

If $\du{d}$ were in $\bcom{(\du{a})}$, then in particular $\du{d}|\du{C}$ would commute with the right multiplication
$h$ by $\dot{c}$ on $\du{C}$, since $0\oplus h$ is in $\com{(\du{a})}$ by the second paragraph of this example. This would
imply that $\dot{d}_0\dot{c}=\dot{c}\dot{d}_0$, a contradiction with the choice of $\dot{c}$ and $\dot{d}_0$.
\end{ex}

Further, the analogy of Theorem \ref{th4} in the context of ${\rm B}(H)$
is not true even for normal operators \cite{JW}. But perhaps a more proper formulation of the problem is as follows.

\smallskip
{\bf Problem.} Suppose that operators $a,b\in{\rm B}(H)$ satisfy $\|d_b(x)\|\leq\kappa\|d_a(x)\|$ for all $x\in{\rm B}(H)$,
where $\kappa$ is a constant. Is then necessarily $d_b({\rm B}(H))\subseteq d_a({\rm B}(H))$? 

\smallskip
Perhaps the answer to the above question is negative in general, but it would be interesting to have a counterexample
and to know if the answer is positive for some large classes of operators $a$ (e.g. hyponormal).

\end{document}